\renewcommand\paragraph{\@startsection{paragraph}{4}{\z@}%
            {-2.5ex\@plus -1ex \@minus -.25ex}%
            {1.25ex \@plus .25ex}%
            {\normalfont\normalsize\bfseries}}
\def\hlinewd#1{%
\noalign{\ifnum0=`}\fi\hrule \@height #1 \futurelet
\reserved@a\@xhline}
\tikzset{cross/.style={cross out, draw=blue, minimum size=2*(#1-\pgflinewidth), inner sep=0pt, outer sep=0pt},
	%default radius will be 1pt. 
	cross/.default={1pt}}
\newcommand\blfootnote[1]{%
  \begingroup
  \renewcommand\thefootnote{}\footnote{#1}%
  \addtocounter{footnote}{-1}%
  \endgroup
}
\newcommand*{\dd}{{\,\mathrm{d}}}
\newcommand{\Av}{\mathbf{A}}
\newcommand{\Cv}{\mathbf{C}}
\newcommand{\Fv}{\mathbf{F}}
\newcommand{\Gv}{\mathbf{G}}
\newcommand{\Iv}{\mathbf{I}}
\newcommand{\Kv}{\mathbf{K}}
\newcommand{\Lv}{\mathbf{L}}
\newcommand{\Mv}{\mathbf{M}}
\newcommand{\Qv}{\mathbf{Q}}
\newcommand{\Uv}{\mathbf{U}}
\newcommand{\Vv}{\mathbf{V}}
\newcommand{\Wv}{\mathbf{W}}
\newcommand{\Xv}{\mathbf{X}}
\newcommand{\Yv}{\mathbf{Y}}
\newcommand{\Zv}{\mathbf{Z}}
\newcommand{\Psiv}{\mathbf{\Psi}}
\newcommand{\Sigmav}{\mathbf{\Sigma}}
\newcommand{\gv}{\mathbf{g}}
\newcommand{\vv}{\mathbf{v}}
\newcommand{\wv}{\mathbf{w}}
\newcommand{\xv}{\mathbf{x}}
\newcommand{\yv}{\mathbf{y}}
\numberwithin{equation}{section}
\newtheorem{proposition}{Proposition}
\numberwithin{theorem}{section}
\numberwithin{lemma}{section}
\numberwithin{corollary}{section}
\numberwithin{proposition}{section}
\numberwithin{definition}{section}
\numberwithin{example}{section}
\newcounter{algorithmicH}
\let\oldalgorithmic\algorithmic
\renewcommand{\algorithmic}{
  \stepcounter{algorithmicH}
  \oldalgorithmic}
\renewcommand{\theHALG@line}{ALG@line.\thealgorithmicH.\arabic{ALG@line}}
\title{{\fontsize{16}{16}\selectfont \textbf{Another look at Residual Dynamic Mode Decomposition in the regime of fewer Snapshots than Dictionary Size}}\vspace{-.15in}}
\author{\normalsize{Matthew J. Colbrook$^{1*}$}\\
\footnotesize{$^1$DAMTP, University of Cambridge, Cambridge, CB3 0WA, United Kingdom}}
\date{}
\begin{document}
\maketitle

\blfootnote{$^*$ Corresponding author (m.colbrook@damtp.cam.ac.uk).}%\\ %%%%%%%%%%%%
%%% ABSTRACT
%%%%%%%%%%%%
\vspace{-.2in}
\begin{abstract}
Residual Dynamic Mode Decomposition (ResDMD) offers a method for accurately computing the spectral properties of Koopman operators. It achieves this by calculating an infinite-dimensional residual from snapshot data, thus overcoming issues associated with finite truncations of Koopman operators, such as spurious eigenvalues. These spectral properties include spectra and pseudospectra, spectral measures, Koopman mode decompositions, and dictionary verification. In scenarios where there are fewer snapshots than dictionary size, particularly for exact DMD and kernelized EDMD, ResDMD has traditionally been applied by dividing snapshot data into a training set and a quadrature set. Through a novel computational approach of solving a dual least-squares problem, we demonstrate how to eliminate the need for two datasets. We provide an analysis of these new residuals for exact DMD and kernelized EDMD, demonstrating ResDMD's versatility and broad applicability across various dynamical systems, including those modeled by high-dimensional and nonlinear observables. The utility of these new residuals is showcased through three diverse examples: the analysis of cylinder wake, the study of aerofoil cascades, and the compression of data from transient shockwave experimental data. This approach not only simplifies the application of ResDMD but also extends its potential for deeper insights into the dynamics of complex systems.
\end{abstract}
\noindent\emph{Keywords --} dynamical systems, Koopman operator, data-driven discovery,
dynamic mode decomposition, ResDMD, spectral theory, kernel methods
%\linespread{0.9}\selectfont{}
%{\small\selectfont{}\tableofcontents}
%\linespread{0.95}\selectfont{}

\section{Introduction}

In this paper, we consider discrete-time dynamical systems:
\begin{equation}
\label{eq:DynamicalSystem} 
\xv_{n+1} = \Fv(\xv_n), \qquad n= 0,1,2,\ldots, 
\end{equation}
where $\xv\in\Omega$ denotes the state of the system, and $\Omega\subseteq\mathbb{R}^d$ is the statespace. The function $\Fv:\Omega \rightarrow \Omega$ governs the system's evolution. In many modern applications, the system's dynamics are too complicated to describe analytically, or we only have access to incomplete knowledge of $\Fv$. Often, knowledge is limited to discrete-time snapshots of the system, i.e., a finite dataset
$$
\left\{\xv^{(m)},\yv^{(m)}\right\}_{m=1}^M\quad \text{such that}\quad \yv^{(m)}=\Fv(\xv^{(m)}),\quad m=1,\ldots,M.
$$
We concisely write this data in the form of snapshot matrices
\begin{equation}
\label{eq:snapshot_data}
\Xv=\begin{pmatrix} 
\xv^{(1)} & \xv^{(2)} & \cdots & \xv^{(M)}
\end{pmatrix}\in\mathbb{R}^{d\times M},\quad \Yv=\begin{pmatrix} 
\yv^{(1)} & \yv^{(2)} & \cdots & \yv^{(M)}
\end{pmatrix}\in\mathbb{R}^{d\times M}.
\end{equation}
The question becomes how to use this data to meaningfully study
the dynamical system.

Koopman operators \cite{koopman1931hamiltonian,koopman1932dynamical} offer a powerful framework to study this question by addressing the fundamental issue of \textit{nonlinearity} of \cref{eq:DynamicalSystem}. We lift a nonlinear system \eqref{eq:DynamicalSystem} into an infinite-dimensional space of observable functions $g:\Omega\rightarrow\mathbb{C}$ using a Koopman operator $\mathcal{K}$:
$$
[\mathcal{K}g](\xv) = g(\Fv(\xv)), \quad \text{so that}\quad [\mathcal{K}g](\xv_n)=g(\xv_{n+1}).
$$
Through this approach, the evolution dynamics become linear, enabling the use of generic solution techniques based on spectral decompositions. Hence, the fundamental objective is determining the spectral properties of $\mathcal{K}$ from the snapshot data in \cref{eq:snapshot_data}. Since $\mathcal{K}$ acts on an \textit{infinite-dimensional} function space, we have exchanged the nonlinearity in \eqref{eq:DynamicalSystem} for an infinite-dimensional spectral problem associated with the unitary operator $\mathcal{K}$. In particular, the spectral properties of $\mathcal{K}$ can be significantly more complex and challenging to compute than those of a finite matrix \cite{colbrookthesis,SCI_ref,colbrook2022computation,colbrook2022foundations,colbrook2021computingCIMP,colbrook2019compute}.

One of the most successful algorithms for spectral analysis of Koopman operators is \textit{Dynamic Mode Decomposition} (DMD), initially developed in the context of fluid dynamics \cite{schmid2009dynamic,schmid2010dynamic}. The \textit{Koopman mode decomposition} (KMD) was introduced in \cite{mezic2005spectral}, providing a theoretical basis to connect DMD with Koopman operators \cite{rowley2009spectral}. This connection validated DMD's application in data-driven dynamical systems and offered a powerful yet straightforward approach for approximating the spectral properties of Koopman operators. Fusing contemporary Koopman theory with an efficient numerical algorithm has led to significant advancements and a surge in research. For example, the reader can consult the early reviews \cite{mezic2013analysis,budivsic2012applied} and more recent reviews \cite{brunton2021modern,colbrook2023multiverse}.

However, practitioners soon realized that simply building linear models in terms of the primitive
measured variables cannot sufficiently capture nonlinear dynamics beyond periodic and quasi-periodic phenomena. A breakthrough occurred with the introduction of Extended DMD (EDMD) \cite{williams2015data}, which generalizes DMD to a broader class of basis functions to expand the Koopman operator's approximate eigenfunctions. One may think of EDMD as a Galerkin method to approximate the Koopman operator by a finite matrix. It is well-known that truncating/discretizing an infinite-dimensional spectral problem to a finite matrix eigenvalue problem is fraught with danger. Common challenges include spurious eigenvalues, missing parts of the spectrum, instabilities caused by discretization and dealing with continuous spectra.

\begin{table}
\renewcommand{\arraystretch}{1.7}
\footnotesize
\centering
\begin{tabular}{|c|c:c|c:c|} 
\cline{2-5}
\multicolumn{1}{c|}{}          & \multicolumn{2}{c|}{\textbf{Exact DMD}} & \multicolumn{2}{c|}{\textbf{Kernelized EDMD}}\\ 
\hline
\!\textbf{Truncated SVD}\! & \multicolumn{2}{c|}{$\Xv \approx \Uv \mathbf{\Sigma}\Vv^*$} & \multicolumn{2}{c|}{$\sqrt{\Wv}\Psiv_X \approx \Qv\Sigmav \Zv^*$}\\ 
\hline
\multirow{2}{*}{\!\textbf{Least-Squares}\!} & \!\! Orig.: \! & $\|\Yv{-}\Kv\Xv\|_\mathrm{F}$ &\!\! Orig.:\! & $\|\sqrt{\Wv}\Psiv_Y{-}\sqrt{\Wv}\Psiv_X \Kv\|_\mathrm{F}$ \\ 
\cdashline{2-5}&\!\! Dual:\! &\!\! $\|\Yv\Vv\Sigmav^{-1}{-}\Xv\Vv\Sigmav^{-1}\widetilde{\Kv}^*\|_\mathrm{F}$\!\! &\!\! Dual:\! & \!\!$\|(\sqrt{\Wv}\Psiv_Y)^*\Qv\Sigmav^{-1}{-}(\sqrt{\Wv}\Psiv_X)^*\Qv\Sigmav^{-1}\widehat{\Kv}^*\|_\mathrm{F}$\!\!\!\\ 
\hline
\!\textbf{New Residual}\!& \multicolumn{2}{c|}{\!\!$\!{\frac{1}{\|\vv\|}\sqrt{\vv^*\widetilde{\Lv}\vv{-}\lambda\vv^*\lambda\widetilde{\Kv}^*\vv{-}\overline{\lambda}\vv^*\widetilde{\Kv}\vv{+}|\lambda|^2\vv^*\vv}}\!$\!\!}& \multicolumn{2}{c|}{\!$\!{\frac{1}{\|\vv\|}\sqrt{\vv^*\widehat{\Lv}\vv{-}\lambda \vv^*\widehat{\Kv}^*\vv{-}\overline{\lambda}\vv^*\widehat{\Kv}\vv{+}|\lambda|^2\vv^*\vv}}\!$\!}  \\
\hline
\!\textbf{Additional Matrix}\!&\multicolumn{2}{c|}{\!$\widetilde{\Lv}_{\mathrm{DMD}}=(\Yv\Vv\Sigmav^{-1})^*(\Yv\Vv\Sigmav^{-1})$\!}&\multicolumn{2}{c|}{\!$\widehat{\Lv}=(\Qv\Sigmav^{-1})^*\sqrt{\Wv}\Psiv_Y\Psiv_Y^*\sqrt{\Wv}(\Qv\Sigmav^{-1})$\!}\\
\hline
\end{tabular}
\renewcommand{\arraystretch}{1}
\normalsize
\caption{High-level overview of the new residuals for exact DMD and kernelized EDMD. The idea is to consider a dual least-squares problem. The matrices and SVD for kernelized EDMD are indirectly computed using the kernel trick.}
\label{schematic}
\end{table}

Residual DMD (ResDMD) has been introduced to tackle these problems \cite{colbrook2021rigorous} (see also the applications in \cite{colbrook2023residualJFM}). The idea of ResDMD is to introduce a new (finite) matrix computed from the snapshot data that allows the computation of infinite-dimensional residuals to control truncation/discretization errors. ResDMD computes spectra and pseudospectra of general Koopman operators with error control and computes smoothed approximations of spectral measures (including continuous spectra) with explicit high-order convergence theorems. ResDMD thus provides robust and verified Koopmanism, and can also be used to verify chosen dictionaries of observables and Koopman mode decompositions. The convergence theorems in \cite{colbrook2021rigorous} (as with all convergence theorems regarding DMD and the properties of Koopman operators) are stated in terms of the so-called large data limit, where $M\rightarrow\infty$.

Often, one deals with scenarios where $M$, the number of snapshots, is bounded above by $N$, the dimension of the subspace spanned by the dictionary. In this regimen, a naïve implementation of ResDMD leads to vanishing residuals. So far, the solution in papers that use ResDMD has been to split up the snapshot data into one subset that trains or finds a dictionary and another subset that computes residuals. This approach allows the convergence theory to hold. In this paper, we propose a simple alternative strategy. We consider a dual least-squares problem associated with the regime $M\leq N$ that leads to a dual residual that measures the objective function value of the associated minimization problem. When using ResDMD with exact DMD, this residual corresponds to the right approximate eigenvectors of the DMD matrix. When using ResDMD with kernelized EDMD, this residual corresponds to the left approximate eigenvectors of the EDMD matrix. This new method completely avoids the need to split up the snapshot data and associates a natural residual with DMD methods when $M\leq N$. The idea is summarized in \cref{schematic} for exact DMD and kernelized EDMD but can be readily extended to other DMD methods.

The paper is organized as follows. In \cref{sec:prelim}, we provide preliminaries concerning EDMD and ResDMD. We then build an appropriate residual and algorithms for using ResDMD with exact DMD in \cref{sec:exact_ResDMD}. These ideas are extended to kernelized EDMD in \cref{sec:kernelized_ResDMD}. We end with three examples in \cref{sec:examples}. The first is a simple validation on the classical cylinder wake problem. The second example concerns verified Koopman modes of a periodic cascade of aerofoils. In the final example, we demonstrate how using residuals can lead to a more efficient compression on a transient shockwave with a highly non-normal Koopman operator. Code for all of these examples can be found at \url{https://github.com/MColbrook/Residual-Dynamic-Mode-Decomposition/tree/main/Examples_gallery_4}.

\section{Preliminaries}
\label{sec:prelim}

This section briefly recalls the notation and ideas of EDMD and ResDMD.

\subsection{EDMD}
\label{sec:EDMD}

The standard DMD algorithm can accurately characterize periodic and quasi-periodic behaviors in nonlinear systems. However, DMD models based on linear observables generally fail to capture truly nonlinear phenomena. \textit{Extended DMD} (EDMD) \cite{williams2015data} addresses this limitation by allowing nonlinear observables.\footnote{The connection between EDMD and the earlier variational approach of conformation dynamics \cite{noe2013variational,nuske2014variational} from molecular dynamics is explored in \cite{wu2017variational,klus2018data}.} EDMD constructs a finite-dimensional approximation of $\mathcal{K}$ utilizing the snapshot data in~\cref{eq:snapshot_data}. One first chooses a dictionary $\{\psi_1,\ldots,\psi_{N}\}$, i.e., a list of observables, in the space $L^2(\Omega,\omega)$, where $\omega$ is a positive (not necessarily finite) measure on the statespace $\Omega$. These observables generate a finite-dimensional subspace $V_N=\mathrm{span}\{\psi_1,\ldots,\psi_{N}\}$. EDMD selects a matrix $\Kv\in\mathbb{C}^{N\times N}$ with the goal of approximating $\mathcal{K}$ on $V_N$:
$$
{[\mathcal{K}\psi_j](\xv) = \psi_j(\Fv(\xv)) \approx \sum_{i=1}^{N} (\Kv)_{ij} \psi_i(\xv)},\quad1\leq j\leq {N}.
$$
Define the vector-valued feature map $\Psi(\xv)=\begin{bmatrix}\psi_1(\xv) & \cdots& \psi_{{N}}(\xv) \end{bmatrix}\in\mathbb{C}^{1\times {N}}.$ Any $g\in V_{N}$ can be written as $g(\xv)=\sum_{j=1}^{N}\psi_j(\xv)g_j=\Psi(\xv)\,\gv$ for some vector $\gv\in\mathbb{C}^{N}$. It follows that
$$
	[\mathcal{K}g](\xv)=\Psi(\Fv(\xv))\,\gv=\Psi(\xv)(\Kv\,\gv)+\underbrace{\left(\sum_{j=1}^{N}\psi_j(\Fv(\xv))g_j-\Psi(\xv)(\Kv\,\gv)\right)}_{R(\gv,\xv)}.
$$
Typically, $V_{N}$ is not an invariant subspace of $\mathcal{K}$ and no choice of $\Kv$ makes $R(\gv,\xv)$ zero for all $g\in V_N$ and $\omega$-almost every $\xv\in\Omega$. Hence, we select $\Kv$ as a solution of
\begin{equation} \label{eq:ContinuousLeastSquaresProblem}
	\min_{\Kv\in\mathbb{C}^{N\times N}} \left\{\int_\Omega \max_{\substack{\gv\in\mathbb{C}^{N}\\\|\Cv\gv\|_{\ell^2}=1}}|R(\gv,\xv)|^2\dd\omega(\xv)=\int_\Omega \left\|\Psi(\Fv(\xv))\Cv^{-1} - \Psi(\xv)\Kv\Cv^{-1}\right\|^2_{\ell^2}\dd\omega(\xv)\right\}.
\end{equation}
Here, $\Cv$ is an invertible positive self-adjoint matrix that controls the size of $g=\Psi\gv$. 

We approximate the integral in~\eqref{eq:ContinuousLeastSquaresProblem} via a quadrature rule with nodes $\{\xv^{(m)}\}_{m=1}^{M}$ and weights $\{w_m\}_{m=1}^{M}$.  For notational convenience, let $\Wv=\mathrm{diag}(w_1,\ldots,w_{M})$ and
\begin{equation*}
	\begin{split}
		\Psiv_X=\begin{pmatrix}
			       \Psi(\xv^{(1)}) \\
			       \vdots              \\
			       \Psi(\xv^{(M)})
		       \end{pmatrix}\in\mathbb{C}^{M\times N},\quad
		\Psiv_Y=\begin{pmatrix}
			       \Psi(\yv^{(1)}) \\
			       \vdots              \\
			       \Psi(\yv^{(M)})
		       \end{pmatrix}\in\mathbb{C}^{M\times N}.
	\end{split}
\end{equation*}
The discretized version of~\eqref{eq:ContinuousLeastSquaresProblem} is the following weighted least-squares problem:
\begin{equation}
	\label{EDMD_opt_prob2}
	\min_{\Kv\in\mathbb{C}^{N\times N}}\left\{\sum_{m=1}^{M} w_m\left\|\Psi(\yv^{(m)})\Cv^{-1}-\Psi(\xv^{(m)})\Kv\Cv^{-1}\right\|^2_{\ell^2}\!=\!\left\|\Wv^{1/2}\Psiv_Y\Cv^{-1}-\Wv^{1/2}\Psiv_X \Kv\Cv^{-1}\right\|_{\mathrm{F}}^2\right\},
\end{equation}
where $\|\cdot\|_{\mathrm{F}}$ denotes the Frobenius norm.
By reducing the size of the dictionary if necessary, we may assume without loss of generality that $\Wv^{1/2}\Psiv_X$ has rank $N$. A solution to~\eqref{EDMD_opt_prob2} is
\begin{equation}
\label{EDMD_matrix_def}
\Kv= (\Wv^{1/2}\Psiv_X)^\dagger \Wv^{1/2}\Psiv_Y=(\Psiv_X^*\Wv\Psiv_X)^\dagger\Psiv_X^*\Wv\Psiv_Y,
\end{equation}
where `$\dagger$' denotes the pseudoinverse. Though this solution is independent of $\Cv$, choosing the correct $\Cv$ is vital in other variants \cite{colbrook2023mpedmd}. We define the matrices
\begin{equation}
\label{eq_EDMD_corr_matrices}
\Gv=\Psiv_X^*\Wv\Psiv_X = \sum_{m=1}^{M} w_m \Psi(\xv^{(m)})^*\Psi(\xv^{(m)}),\quad \Av=\Psiv_X^*\Wv\Psiv_Y = \sum_{m=1}^{M} w_j \Psi(\xv^{(m)})^*\Psi(\yv^{(m)}).
\end{equation}
If the quadrature converges\footnote{There are typically three scenarios for convergence of the quadrature rule (see the discussion in \cite{colbrook2023multiverse}): high-order quadrature rules, suitable for small $d$ and when we are free to choose $\{\xv^{(m)}\}_{m=1}^{M}$; drawing $\{\xv^{(m)}\}_{m=1}^{M}$ from a single trajectory and setting $\omega_m=1/M$, suitable for ergodic systems; and drawing $\{\xv^{(m)}\}_{m=1}^{M}$ at random.} then
\begin{equation}
	\label{quad_convergence}
	\lim_{M\rightarrow\infty}\Gv_{jk} = \langle \psi_k,\psi_j \rangle\quad \text{ and }\quad
	\lim_{M\rightarrow\infty}\Av_{jk} = \langle \mathcal{K}\psi_k,\psi_j \rangle,
\end{equation}
where $\langle \cdot,\cdot \rangle$ is the inner product associated with $L^2(\Omega,\omega)$. It follows that $\Kv$ approaches a matrix representation of $\mathcal{P}_{V_{N}}\mathcal{K}\mathcal{P}_{V_{N}}^*$ in the large data limit $M\rightarrow\infty$. Here, $\mathcal{P}_{V_{N}}$ denotes the orthogonal projection onto $V_{N}$. Hence, in the large data limit, EDMD corresponds to the so-called finite section method, which can suffer from the issues discussed in the introduction. These issues are well-documented; for example, the discussion in \cite{colbrook2023multiverse}.

\subsection{ResDMD}

In general, the eigenvalues of the EDMD matrix $\Kv$ may be spurious due to taking a projection onto the finite-dimensional subspace $V_N$, even after taking the limit $M\rightarrow\infty$. Moreover, we may not approximate all of the spectrum of $\mathcal{K}$, and it can be hard to validate the output of EDMD. Care must be taken when discretizing or truncating an infinite-dimensional operator to a finite matrix to compute spectral properties, and this is well-documented throughout the Koopman literature.

\textit{Residual DMD} \cite{colbrook2021rigorous} overcomes these limitations by introducing a simple additional matrix that is computed from the snapshot data. The main idea behind ResDMD is to compute an infinite-dimensional residual. Consider an observable $g=\Psiv\gv\in V_N$, which we aim to be an approximate eigenfunction of $\mathcal{K}$ with an approximate eigenvalue $\lambda$. For now, the method of determining the pair $(\lambda,g)$ is left unspecified. A way to measure the suitability of the candidate pair $(\lambda,g)$ is through the relative residual
\begin{equation}
\label{residual1}
\frac{\|(\mathcal{K}-\lambda I)g\|}{\|g\|}=\sqrt{\frac{\int_{\Omega}|[\mathcal{K}g](\xv)-\lambda g(\xv)|^2\ \mathrm{d} \omega(\xv)}{\int_{\Omega}|g(\xv)|^2\ \mathrm{d} \omega(\xv)}}=\sqrt{\frac{\langle \mathcal{K}g,\mathcal{K}g \rangle-\lambda\langle g,\mathcal{K}g \rangle-\overline{\lambda}\langle \mathcal{K}g,g \rangle+|\lambda|^2\langle g,g \rangle}{\langle g,g \rangle}}.
\end{equation}
For instance, if $\mathcal{K}$ is a normal operator (one that commutes with its adjoint), then
$$
\mathrm{dist}(\lambda,\mathrm{Sp}(\mathcal{K}))=\inf_{f}\frac{\|(\mathcal{K}-\lambda I)f\|}{\|f\|}\leq\frac{\|(\mathcal{K}-\lambda I)g\|}{\|g\|}.
$$
Generalizing to also non-normal $\mathcal{K}$, the residual in \eqref{residual1} is closely related to the concept of pseudospectra \cite{trefethen2005spectra}. Adopting the quadrature interpretation of EDMD, we can define a finite data approximation of the relative residual as:
$$
\mathrm{res}(\lambda,g)=\|(\Wv^{1/2}\Psiv_Y-\lambda\Wv^{1/2}\Psiv_X)\gv\|_{\ell^2}/\|\Wv^{1/2}\Psiv_X\gv\|_{\ell^2}.
$$
We then have
\begin{align}
[\mathrm{res}(\lambda,g)]^2&=\frac{\gv^*\left[\Psiv_Y^*\Wv\Psiv_Y-\lambda \Psiv_Y^*\Wv\Psiv_X -\overline{\lambda} \Psiv_X^*\Wv\Psiv_Y + |\lambda|^2\Psiv_X^*\Wv\Psiv_X \right]\gv}{\gv^*\Psiv_X^*\Wv\Psiv_X\gv}\notag\\
&=\frac{\gv\left[\Psiv_Y^*\Wv\Psiv_Y-\lambda\Av^*-\overline{\lambda}\Av+|\lambda|^2\Gv\right]\gv}{\gv^*\Gv\gv},\label{residual2}
\end{align}
where $\Gv$ and $\Av$ are the same matrices from \eqref{eq_EDMD_corr_matrices}. The right-hand side of \eqref{residual2} has an additional matrix $\Lv=\Psiv_Y^*\Wv\Psiv_Y$. Under the assumption that the quadrature rule converges, this matrix approximates $\mathcal{K}^*\mathcal{K}$:
\begin{equation}
	\label{quad_convergence2}
	\lim_{M\rightarrow\infty}\Lv_{jk} = \langle \mathcal{K}\psi_k,\mathcal{K}\psi_j \rangle.
\end{equation}
Comparing \eqref{residual1} and the square-root of \eqref{residual2}, we observe that
$$
\lim_{M\rightarrow\infty}\mathrm{res}(\lambda,g)=\|(\mathcal{K}-\lambda I)g\|/\|g\|.
$$
Note that there is no approximation or projection on the right-hand side of this equation. Consequently, we can compute an \textit{infinite-dimensional residual} directly using finite matrices, achieving exactness in the limit of large data sets. ResDMD leverages this residual in a suite of algorithms to compute various spectral properties of $\mathcal{K}$. This paper focuses on using residuals to validate candidate eigenpairs and compute pseudospectra.

\section{ResDMD for exact DMD}
\label{sec:exact_ResDMD}

The \textit{exact DMD} algorithm \cite{tu2014dynamic} is the workhorse DMD algorithm. This section shows how to use ResDMD in tandem with exact DMD. This is a particular case of the more general kernelized methods that we treat in \cref{sec:kernelized_ResDMD}, and also serves to introduce the basic ideas.

\subsection{Exact DMD}

Given the snapshot matrices $\Xv,\Yv\in\mathbb{R}^{d\times M}$ in \eqref{eq:snapshot_data}, DMD seeks a matrix $\Kv_{\mathrm{DMD}}\in\mathbb{R}^{d\times d}$ such that $\Yv\approx \Kv_{\mathrm{DMD}}\Xv.$ We can think of this as constructing a \textit{linear and approximate} dynamical system. Notice that the DMD matrix $\Kv_{\mathrm{DMD}}$ acts on state vectors, whereas the EDMD matrix $\Kv$ in \cref{EDMD_matrix_def} acts on coefficients. This duality will be important in what follows.

To find a suitable matrix $\Kv_{\mathrm{DMD}}$, we consider the least-squares minimization problem
\begin{equation}
\label{DMD_opt_vanilla}
\min_{\Kv_{\mathrm{DMD}}\in\mathbb{R}^{d\times d}} \left\|\Yv-\Kv_{\mathrm{DMD}}\Xv\right\|_{\mathrm{F}}.
\end{equation}
A solution to the problem in \eqref{DMD_opt_vanilla} is
$$
\Kv_{\mathrm{DMD}}=\Yv \Xv^{\dagger}\in\mathbb{R}^{d\times d},
$$
where $\dagger$ denotes the Moore--Penrose pseudoinverse. Often, $d\gg M$ and we first project onto a low-dimensional subspace to reconstruct the leading nonzero eigenvalues and eigenvectors of the matrix $\Kv_{\mathrm{DMD}}$ without explicitly computing it. The exact DMD \cite{tu2014dynamic} algorithm does this using an SVD and is summarized in \cref{alg:DMD_vanilla}, where we have assumed that the projected DMD matrix is diagonalizable.\footnote{We make this assumption about various matrices throughout. If this does not hold, one could consider a Schur decomposition \cite{drmavc2023data} that provides an orthogonal set of interacting modes or a block-diagonal Schur decomposition with nearly confluent eigenvalues grouped.} The rank $r$ for the truncated singular value decomposition is usually chosen based on the decay of singular values of $\Xv$.\footnote{If low-dimensional structure is present in the data \cite{udell2019big}, the singular values decrease rapidly, and small $r$ captures the dominant modes. Moreover, the lowest energy modes may be corrupted by noise, and low-dimensional projection is a form of spectral filtering which has the positive effect of dampening the influence of noise \cite{hansen2006deblurring}.}

\begin{algorithm}[t]
\textbf{Input:} Snapshot data $\Xv\in\mathbb{R}^{d\times M}$ and $\Yv\in\mathbb{R}^{d\times M}$, rank $r\in\mathbb{N}$. \\
\vspace{-4mm}
\begin{algorithmic}[1]
\State Compute a truncated SVD of the data matrix $
\Xv \approx \Uv \mathbf{\Sigma}\Vv^*$, $\Uv\in\mathbb{R}^{d\times r}$, $\mathbf{\Sigma}\in\mathbb{R}^{r\times r}$,$\Vv\in\mathbb{R}^{M\times r}.$ The columns of $\Uv$ and $\Vv$ are orthonormal and $\mathbf{\Sigma}$ is diagonal.
\State Compute the compression $
\widetilde{\Kv}_{\mathrm{DMD}}=\Uv^*\Yv\Vv\mathbf{\Sigma}^{-1}\in\mathbb{R}^{r\times r}.$
\State Compute the eigendecomposition $
\widetilde{\Kv}_{\mathrm{DMD}}\Wv=\Wv\mathbf{\Lambda}$.

\noindent{}The columns of $\Wv$ are eigenvectors and $\mathbf{\Lambda}$ is a diagonal matrix of eigenvalues.
\State Compute the modes $
\mathbf{\Phi}=\Yv\Vv\mathbf{\Sigma}^{-1}\Wv.$
\end{algorithmic} \textbf{Output:} The eigenvalues $\mathbf{\Lambda}$ and modes $\mathbf{\Phi}\in\mathbb{R}^{d\times r}$.
\caption{The exact DMD algorithm \cite{tu2014dynamic}, which has become the workhorse DMD algorithm.}
\label{alg:DMD_vanilla}
\end{algorithm}

We can interpret exact DMD as constructing a linear model of the dynamical system on projected coordinates $\widetilde{\xv}=\Uv^*\xv$. Namely, $\widetilde{\xv}_{n+1}\approx \widetilde{\Kv}_{\mathrm{DMD}} \widetilde{\xv}_{n}$. The left singular vectors $\Uv$ are known as proper orthogonal decomposition (POD) modes \cite{berkooz1993proper}. If the SVD is exact, so that $\Xv = \Uv \mathbf{\Sigma}\Vv^*$, then
	$$
	\Kv_{\mathrm{DMD}}=\Yv \Xv^{\dagger}=\Yv\Vv\mathbf{\Sigma}^{-1}\Uv^*.
	$$
	Using this relation, we have
	$$
	\Kv_{\mathrm{DMD}}[\Yv\Vv\mathbf{\Sigma}^{-1}\Wv] =\Yv \Vv \mathbf{\Sigma}^{-1}\underbrace{\Uv^*\Yv\Vv\mathbf{\Sigma}^{-1}}_{\widetilde{\Kv}_{\mathrm{DMD}}}\Wv= [\Yv\Vv\mathbf{\Sigma}^{-1}\Wv] \mathbf{\Lambda},
	$$
	and hence \cref{alg:DMD_vanilla} computes exact eigenvalues and eigenvectors of $\Kv_{\mathrm{DMD}}$. Moreover, one can show that this process identifies all of the nonzero eigenvalues of $\Kv_{\mathrm{DMD}}$ \cite[Thm. 1]{tu2014dynamic}. It is common to call $\Yv\Vv\mathbf{\Sigma}^{-1}\Wv$ \textit{exact modes} and $\Uv\Wv$ \textit{projected modes}.

\subsection{Linear dictionaries and the connection with EDMD}
\label{sec:galerkin_interp}

The connection between \cref{alg:DMD_vanilla} and Koopman operators is revealed once we interpret DMD as an instance of EDMD discussed in \cref{sec:EDMD}. We can think of the $j$th row of the POD matrix $\Uv^*\Xv$ as a linear function $\psi_j$ on the statespace $\Omega$ evaluated at the snapshot data:
$$
\psi_j(\xv)=[\Uv_{:,j}]^*\xv,\quad \psi_j(\xv^{(m)})=[\Uv^*\Xv]_{jm}.
$$
With this choice, and assuming that the SVD in \cref{alg:DMD_vanilla} is exact, the matrices in \cref{eq_EDMD_corr_matrices} become
$$
\Gv=\frac{1}{M} \Uv^*\Xv\Xv^*\Uv=\frac{1}{M}\Sigmav^2,\quad \Av = \frac{1}{M} \Uv^*\Xv\Yv^*\Uv=\frac{1}{M}\Sigmav\Vv^*\Yv^*\Uv.
$$
It follows that the EDMD matrix $\Kv$ satisfies
$$
\Kv=\Gv^{-1}\Av=\Sigmav^{-1}\Vv^*\Yv^*\Uv=\widetilde{\Kv}_{\mathrm{DMD}}^*.
$$
Hence, whilst $\Kv$ is an approximation of $\mathcal{K}$ in coefficient space, $\widetilde{\Kv}_{\mathrm{DMD}}$ is an approximation in statespace. This connection has been known since the original EDMD paper \cite{williams2015data}.

\subsection{Residuals}
\label{sec:exact_DMD_res}

Given the connection in \cref{sec:galerkin_interp}, a first attempt at combining ResDMD with exact DMD may be to consider the residual with respect to the new feature map $\Psiv(\xv)=\xv^*\Uv$. That is, for a vector $\vv\in \mathbb{C}^{M}$ and $\lambda\in\mathbb{C}$, consider the squared relative residual\footnote{Notice here that the contribution of the weights $w_j=1/M$ cancel since they are constant.}
\begin{equation}
\label{biagbfvuibi0}
\frac{\vv^*[\Uv^*\Yv\Yv^*\Uv-\lambda \Uv^*\Yv\Xv^*\Uv-\overline{\lambda}\Uv^*\Xv\Yv^*\Uv+|\lambda|^2\Uv^*\Xv\Xv^*\Uv]\vv}{\vv^*\Uv^*\Xv\Xv^*\Uv\vv^*}.
\end{equation}
The following proposition shows that this residual is not a good choice whenever $\Xv^*\Uv$ has full rank and $M\leq d$.

\begin{proposition}
\label{prop_first_res_rubbish}
Suppose that $M\leq d$, $\Xv^*\Uv$ has rank $M$ and that $\widetilde{\Kv}_{\mathrm{DMD}}$ is the output of \cref{alg:DMD_vanilla} with input $r=M$. Then for any eigenvalue-eigenvector pair $\lambda$ and $\vv\in \mathbb{C}^{M}$ of $\widetilde{\Kv}_{\mathrm{DMD}}^*$, the residual in \eqref{biagbfvuibi0} vanishes.
\end{proposition}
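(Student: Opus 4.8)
The plan is to recognize the quantity in \eqref{biagbfvuibi0} as the squared finite-data residual $[\mathrm{res}(\lambda,g)]^2$ from \eqref{residual2} associated with the linear feature map $\Psiv(\xv)=\xv^*\Uv$ of \cref{sec:galerkin_interp}, the observable $g=\Psiv\vv$, and the constant weights $w_m=1/M$, and then to show that its numerator vanishes precisely because $\vv$ is an eigenvector of the EDMD matrix. First I would record the consequences of the hypotheses: since $\Xv^*\Uv=(\Uv\Sigmav\Vv^*)^*\Uv=\Vv\Sigmav$ whenever $r=M$, the assumption that $\Xv^*\Uv$ has rank $M$ is equivalent to $\Sigmav$ being invertible, i.e.\ to $\Xv$ having full column rank $M$. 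Hence the rank-$M$ truncated SVD is in fact exact, $\Vv$ is a square orthogonal matrix, and $\Psiv_X:=\Xv^*\Uv$ is itself a square invertible matrix (and $\Psiv_Y:=\Yv^*\Uv$). Thus the identities of \cref{sec:galerkin_interp} hold verbatim here rather than merely in the large-data limit.

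Next I would rewrite the four blocks appearing in \eqref{biagbfvuibi0}, namely $\Uv^*\Yv\Yv^*\Uv$, $\Uv^*\Yv\Xv^*\Uv$, $\Uv^*\Xv\Yv^*\Uv$ and $\Uv^*\Xv\Xv^*\Uv$, as $\Psiv_Y^*\Psiv_Y$, $\Psiv_Y^*\Psiv_X$, $\Psiv_X^*\Psiv_Y$ and $\Psiv_X^*\Psiv_X$ respectively. Completing the square, the numerator of \eqref{biagbfvuibi0} collapses to $\|(\Psiv_Y-\lambda\Psiv_X)\vv\|_{\ell^2}^2$ and the denominator to $\|\Psiv_X\vv\|_{\ell^2}^2$ (the constant weights $w_m=1/M$ cancel, as the footnote to \eqref{biagbfvuibi0} notes). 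From \eqref{EDMD_matrix_def} with $\Wv=\tfrac1M\Iv$ and $\Psiv_X$ invertible, the EDMD matrix is $\Kv=(\Psiv_X^*\Psiv_X)^{-1}\Psiv_X^*\Psiv_Y=\Psiv_X^{-1}\Psiv_Y$, and by \cref{sec:galerkin_interp} this equals $\widetilde{\Kv}_{\mathrm{DMD}}^*$; hence $\Psiv_Y=\Psiv_X\widetilde{\Kv}_{\mathrm{DMD}}^*$. Therefore, for an eigenpair $(\lambda,\vv)$ of $\widetilde{\Kv}_{\mathrm{DMD}}^*$,
$$
(\Psiv_Y-\lambda\Psiv_X)\vv=\Psiv_X\big(\widetilde{\Kv}_{\mathrm{DMD}}^*-\lambda\Iv\big)\vv=0,
$$
so the numerator of \eqref{biagbfvuibi0} is zero; since $\Psiv_X$ is invertible and $\vv\neq 0$, the denominator $\|\Psiv_X\vv\|_{\ell^2}^2$ is strictly positive, so \eqref{biagbfvuibi0} is well-defined and equals $0$.

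Since the argument is essentially an algebraic identity, I do not expect a genuine obstacle; the only points needing care are (i) checking that the rank hypothesis really does force the rank-$M$ SVD to be exact, so the reduction $\Kv=\widetilde{\Kv}_{\mathrm{DMD}}^*$ of \cref{sec:galerkin_interp} applies exactly and not just asymptotically, and (ii) tracking the cancellation of the constant quadrature weights so that \eqref{biagbfvuibi0} is literally the squared residual of \eqref{residual2}. The proposition then makes precise the remark in the introduction that a naïve application of ResDMD yields vanishing residuals when $M\le N$.
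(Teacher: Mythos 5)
Your proof is correct and follows essentially the same route as the paper: both reduce the numerator of \eqref{biagbfvuibi0} to $\|(\Yv^*-\lambda\Xv^*)\Uv\vv\|_{\ell^2}^2$ and factor out the invertible matrix $\Xv^*\Uv=\Vv\Sigmav$ to expose $(\widetilde{\Kv}_{\mathrm{DMD}}^*-\lambda)\vv=0$, the only cosmetic difference being that you phrase this through the EDMD identification $\Kv=\Psiv_X^{-1}\Psiv_Y=\widetilde{\Kv}_{\mathrm{DMD}}^*$ of \cref{sec:galerkin_interp} while the paper substitutes the exact SVD directly. Your additional observation that the denominator is strictly positive is a harmless (and slightly more careful) extra.
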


\begin{proof}
The numerator in \eqref{biagbfvuibi0} is the square of the $\ell^2$-vector norm of $(\Yv^*-\lambda \Xv^*)\Uv\vv$.
From the decomposition $\Xv = \Uv \mathbf{\Sigma}\Vv^*$, we see that
$$
(\Yv^*-\lambda \Xv^*)\Uv\vv
=\Yv^*\Uv\vv-\lambda \Vv\Sigmav\vv=\Vv\Sigmav(\Sigmav^{-1}\Vv^*\Yv^*\Uv-\lambda)\vv=\Vv\Sigmav(\widetilde{\Kv}_{\mathrm{DMD}}^*-\lambda)\vv=0.
$$
\end{proof}

The problem with the residual in \cref{biagbfvuibi0} is that in the regime $M\leq d$, a solution $\Kv_{\mathrm{DMD}}=\Yv\Xv^{\dagger}$ to the least squares problem in \cref{DMD_opt_vanilla} can be chosen so that
$$
\Yv-\Kv_{\mathrm{DMD}}\Xv=\Yv-\Yv\Xv^{\dagger}\Xv=\Yv-\Yv=0.
$$
This is because $\Xv^{\dagger}\Xv$ acts as the identity on $\mathbb{R}^{M}$ (assuming $\Xv$ has rank $M$).
To associate a residual with exact DMD, we express the $\widetilde{\Kv}_{\mathrm{DMD}}$ as a solution of a different least-squares problem. Dropping the the assumption that $r=M$ in \cref{alg:DMD_vanilla}, we still have $\Xv\Vv\Sigmav^{-1}=\Uv$ and hence
$$
\widetilde{\Kv}_{\mathrm{DMD}}=\Uv^*\Yv\Vv\Sigmav^{-1}=\Uv^\dagger\Yv\Vv\Sigmav^{-1}=(\Xv\Vv\Sigmav^{-1})^\dagger\Yv\Vv\Sigmav^{-1}.
$$
It follows that $\widetilde{\Kv}_{\mathrm{DMD}}$ is a solution to the least squares problem
\begin{equation}
\label{exact_DMD_opt_reg}
\min_{\Mv\in\mathbb{C}^{r\times r}} \left\|\Yv\Vv\Sigmav^{-1}-\Xv\Vv\Sigmav^{-1}\Mv\right\|_{\mathrm{F}}.
\end{equation}
This optimization problem no longer has the vanishing residuals problem. Hence, given any eigenpair $(\lambda,\vv)\in\mathbb{C}\times \mathbb{C}^r$ of $\widetilde{\Kv}_{\mathrm{DMD}}$, we consider the relative residual
$$
{\left\|\Yv\Vv\Sigmav^{-1}\vv-{\lambda}\Xv\Vv\Sigmav^{-1}\vv\right\|_{\ell^2}}\big/{\left\|\vv\right\|_{\ell^2}}.%=\sqrt{\frac{(\Vv\Sigmav^{-1}\vv)^*\left[\Yv^*\Yv-\lambda \Yv^*\Xv-\overline{\lambda}\Xv^*\Yv+|\lambda|^2\Xv^*\Xv\right](\Vv\Sigmav^{-1}\vv)}{(\Vv\Sigmav^{-1}\vv)^*\Xv^*\Xv(\Vv\Sigmav^{-1}\vv)}}.
$$
Here, we have used the fact that $\left\|\Xv\Vv\Sigmav^{-1}\vv\right\|_{\ell^2}=\|\Uv\vv\|_{\ell^2}=\|\vv\|_{\ell^2}$ in the normalization. We may evaluate this residual directly, or we can use the fact that
$$
\left\|\Yv\Vv\Sigmav^{-1}\vv-{\lambda}\Xv\Vv\Sigmav^{-1}\vv\right\|_{\ell^2}^2=(\Vv\Sigmav^{-1}\vv)^*\left[\Yv^*\Yv-\lambda \Yv^*\Xv-\overline{\lambda}\Xv^*\Yv+|\lambda|^2\Xv^*\Xv\right](\Vv\Sigmav^{-1}\vv).
$$
This expression simplifies since
$$
(\Vv\Sigmav^{-1})^*\Xv^*\Yv(\Vv\Sigmav^{-1})=\widetilde{\Kv}_{\mathrm{DMD}},\quad (\Vv\Sigmav^{-1})^*\Xv^*\Xv(\Vv\Sigmav^{-1})=\Uv^*\Uv=\Iv_{r},
$$
where $\Iv_r$ denotes the $r\times r$ identity matrix. Letting $\widetilde{\Lv}_{\mathrm{DMD}}=(\Yv\Vv\Sigmav^{-1})^*(\Yv\Vv\Sigmav^{-1})$, we arrive at the relative residual
$$
\mathrm{res}(\lambda,\vv)=
{\sqrt{\vv^*[\widetilde{\Lv}_{\mathrm{DMD}}-\lambda\widetilde{\Kv}_{\mathrm{DMD}}^*-\overline{\lambda}\widetilde{\Kv}_{\mathrm{DMD}}+|\lambda|^2\Iv_r]\vv}}\Big/{\|\vv\|_{\ell^2}}
$$
for a candidate eigenpair $(\lambda,\vv)\in\mathbb{C}\times \mathbb{C}^r$ of $\widetilde{\Kv}_{\mathrm{DMD}}$.

\begin{algorithm}[t]
\textbf{Input:}  Snapshot data $\Xv\in\mathbb{R}^{d\times M}$ and $\Yv\in\mathbb{R}^{d\times M}$, rank $r\in\mathbb{N}$. \\
\vspace{-4mm}
\begin{algorithmic}[1]
\State Run exact DMD (\cref{alg:DMD_vanilla}).
\State Compute the matrix $\widetilde{\Lv}_{\mathrm{DMD}}=(\Yv\Vv\Sigmav^{-1})^*(\Yv\Vv\Sigmav^{-1})$.
\State For each eigenpair $(\lambda_j,\wv_{j})$, compute the residual
$$
r_j=\mathrm{res}(\lambda_j,\wv_{j})=\sqrt{\frac{\wv_j^*\widetilde{\Lv}_{\mathrm{DMD}}\wv_j}{\|\wv_j\|_{\ell^2}^2}-|\lambda_j|^2}.
$$
\end{algorithmic} \textbf{Output:} Matrix $\widetilde{\Kv}_{\mathrm{DMD}}$, the eigenvalues $\mathbf{\Lambda}$ and eigenvector coefficients $\Wv$ and residuals $\{r_j\}_{j=1}^r$.
\caption{ResDMD algorithm for validating eigenpairs of exact DMD. Given a threshold $\epsilon>0$, we recommend discarding eigen-triples $(\lambda_j,\wv_{j})$ with $r_j>\epsilon$.}
\label{alg:exact_ResDMD1}
\end{algorithm}

This residual naturally leads to two different flavors of algorithms:
\begin{itemize}
	\item We can validate eigenpairs of exact DMD and disregard spurious eigenpairs. The process is summarized in \cref{alg:exact_ResDMD1}. Notice that if $\widetilde{\Kv}_{\mathrm{DMD}}\wv=\lambda\wv$ then the residual simplifies to
	$$
	\mathrm{res}(\lambda,\wv)=
	\sqrt{\frac{\wv^*\widetilde{\Lv}_{\mathrm{DMD}}\wv}{\|\wv\|_{\ell^2}^2}-|\lambda|^2}
	$$
	Given a threshold $\epsilon>0$, we recommend discarding eigenpairs with a residual larger than $\epsilon$.
	\item We can compute pseudospectra and for any $\lambda\in\mathbb{C}$, compute an approximate eigenfunction by selecting $\vv$ to minimize $\mathrm{res}(\lambda,\vv)$. This process is summarized in \cref{alg:exact_ResDMD2,alg:exact_ResDMD3}.
\end{itemize}

\begin{algorithm}[t]
\textbf{Input:}  Snapshot data $\Xv\in\mathbb{R}^{d\times M}$, $\Yv\in\mathbb{R}^{d\times M}$, rank $r\in\mathbb{N}$, accuracy goal $\epsilon>0$, grid $z_1,\ldots,z_k\in\mathbb{C}$.\\
\vspace{-4mm}
\begin{algorithmic}[1]
\State Run exact DMD (\cref{alg:DMD_vanilla}).
\State Compute the matrix $\Lv_{\mathrm{DMD}}=(\Yv\Vv\Sigmav^{-1})^*(\Yv\Vv\Sigmav^{-1})$.
\State For each $z_j$, compute
$$
\tau_j=\min_{\vv\in\mathbb{C}^r} {\sqrt{\vv^*[\widetilde{\Lv}_{\mathrm{DMD}}-z_j\widetilde{\Kv}_{\mathrm{DMD}}^*-\overline{z_j}\widetilde{\Kv}_{\mathrm{DMD}}+|z_j|^2\Iv_r]\vv}}\Big/{\|\vv\|_{\ell^2}},
$$
which is an SVD problem, and the corresponding (right) singular vectors $\vv_j$.
\end{algorithmic} \textbf{Output:} Approximate $\epsilon$-pseudospectrum $\{z_j: \tau_j<\epsilon\}$, approximate eigenfunctions $\{\vv_j: \tau_j<\epsilon\}$.\caption{ResDMD algorithm using exact DMD for computation of pseudospectra.}
\label{alg:exact_ResDMD2}
\end{algorithm}

\begin{algorithm}[t]
\textbf{Input:}  Snapshot data $\Xv\in\mathbb{R}^{d\times M}$, $\Yv\in\mathbb{R}^{d\times M}$, rank $r\in\mathbb{N}$, $\lambda\in\mathbb{C}$.\\
\vspace{-4mm}
\begin{algorithmic}[1]
\State Run exact DMD (\cref{alg:DMD_vanilla}).
\State Compute the matrix $\Lv_{\mathrm{DMD}}=(\Yv\Vv\Sigmav^{-1})^*(\Yv\Vv\Sigmav^{-1})$.
\State Compute
$$
r=\min_{\vv\in\mathbb{C}^r} {\sqrt{\vv^*[\widetilde{\Lv}_{\mathrm{DMD}}-\lambda\widetilde{\Kv}_{\mathrm{DMD}}^*-\overline{\lambda}\widetilde{\Kv}_{\mathrm{DMD}}+|\lambda|^2\Iv_r]\vv}}\Big/{\|\vv\|_{\ell^2}},
$$
which is an SVD problem, and the corresponding (right) singular vector $\vv$.
\end{algorithmic} \textbf{Output:} 
Residual $r$ and approximate eigenfunction $\vv$.
\caption{ResDMD algorithm using exact DMD for computation of approximate eigenfunction.}
\label{alg:exact_ResDMD3}
\end{algorithm}

\section{ResDMD for kernelized DMD methods}
\label{sec:kernelized_ResDMD}

We now generalize the discussion in \cref{sec:exact_ResDMD} to kernelized EDMD. This allows us to efficiently deal with nonlinear observables in our dictionary, even when the statespace dimension $d$ is large.

\subsection{Kernelized EDMD}

A naïve construction of the matrix $\Psiv_X^*\Wv\Psiv_Y$ in \cref{sec:EDMD} requires $\mathcal{O}(N^2M)$ operations, which becomes impractical when $N$ is large. As an example, the paper~\cite{williams2015kernel} points out that if $V_N$ is the space of all multivariate polynomials on\footnote{For many modern applications, this is a modest ambient statespace dimension!} $\mathbb{R}^{256}$ with degree up to $20$, then $N\sim 10^{30}$. This number is far too large even to store the EDMD matrix, let alone compute it. This is an example of the curse of dimensionality. Kernelized EDMD~\cite{williams2015kernel} aims to make EDMD practical for large (even infinite) $N$. The idea is to compute a much smaller matrix $\widehat{\Kv}$ with the same nonzero eigenvalues as ${\Kv}$. The fundamental proposition is the following.

\begin{proposition}[Proposition 1 of~\cite{williams2015kernel}]
\label{prop_kern_EDMD}
Let $\sqrt{\Wv}\Psiv_X = \Qv\Sigmav \Zv^*$ be an SVD, where $\Qv\in\mathbb{C}^{M\times M}$ is a unitary matrix, $\Sigmav\in\mathbb{R}^{M\times M}$ is a diagonal matrix with nonincreasing nonnegative entries, and $\Zv\in \mathbb{C}^{N\times M}$ is an isometry. Define the matrix
$$
\widehat{\Kv}=(\Sigmav^\dagger \Qv^*)\left(\sqrt{\Wv}\Psiv_Y\Psiv_X^*\sqrt{\Wv}\right)(\Qv\Sigmav^\dagger)\in\mathbb{C}^{M\times M}.
$$
Then, for $\lambda\neq 0$ and $\vv\in \mathbb{C}^{M}$, $(\lambda,\vv)$ is an eigenvalue-eigenvector pair of $\widehat{\Kv}$ if and only if $(\lambda,\Zv\vv)$ is an eigenvalue-eigenvector pair of $\Kv$.
\end{proposition}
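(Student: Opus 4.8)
The plan is to transfer both eigenvalue problems onto a common $M\times M$ matrix by inserting the SVD $\sqrt{\Wv}\Psiv_X=\Qv\Sigmav\Zv^*$ into the closed form $\Kv=(\sqrt{\Wv}\Psiv_X)^\dagger\sqrt{\Wv}\Psiv_Y$ from \eqref{EDMD_matrix_def}. Because $\Qv$ is unitary and $\Zv$ is an isometry, the pseudoinverse of the product factors in reverse order, $(\sqrt{\Wv}\Psiv_X)^\dagger=\Zv\Sigmav^\dagger\Qv^*$, so with $\Cv:=\Sigmav^\dagger\Qv^*\sqrt{\Wv}\Psiv_Y\in\mathbb{C}^{M\times N}$ we have $\Kv=\Zv\Cv$. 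For $\widehat{\Kv}$, substituting $\Psiv_X^*\sqrt{\Wv}=(\sqrt{\Wv}\Psiv_X)^*=\Zv\Sigmav\Qv^*$ and cancelling $\Qv^*\Qv=\Iv_M$ gives $\widehat{\Kv}=\Cv\Zv\Pv$, where $\Pv:=\Sigmav^\dagger\Sigmav=\Sigmav\Sigmav^\dagger$ is the diagonal $0/1$ projection onto the indices of the nonzero singular values. Writing $\Bv:=\Cv\Zv\in\mathbb{C}^{M\times M}$, the facts I would use are: $\Zv^*\Zv=\Iv_M$ (isometry); $\Pv\Cv=\Cv$ and hence $\Pv\Bv=\Bv$ (since $\Cv$ is left-multiplied by $\Sigmav^\dagger$ and $\Pv\Sigmav^\dagger=\Sigmav^\dagger$); $\widehat{\Kv}=\Bv\Pv$; and $\Kv\Zv=\Zv\Cv\Zv=\Zv\Bv$.

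The crux is that, when $\lambda\neq0$, any relevant eigenvector lands in $\mathrm{range}(\Pv)$, where the trailing projection in $\widehat{\Kv}=\Bv\Pv$ becomes invisible. Concretely, if $\widehat{\Kv}\vv=\lambda\vv$ then $\lambda\vv=\widehat{\Kv}\vv=\Pv\widehat{\Kv}\vv=\lambda\Pv\vv$ (using $\Pv\Bv=\Bv$), so $\vv=\Pv\vv$; and if $\Kv(\Zv\vv)=\lambda\Zv\vv$ then applying $\Zv^*$ and using $\Zv^*\Zv=\Iv_M$ yields $\Bv\vv=\lambda\vv$, which again forces $\vv=\Pv\vv$. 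On $\mathrm{range}(\Pv)$ one has $\widehat{\Kv}\vv=\Bv\Pv\vv=\Bv\vv$, so the three relations $\widehat{\Kv}\vv=\lambda\vv$, $\Bv\vv=\lambda\vv$ and $\Kv(\Zv\vv)=\Zv\Bv\vv=\lambda\,\Zv\vv$ are mutually equivalent for such $\vv$; combined with the two implications above this gives the stated biconditional. Finally, $\Zv$ is injective, so $\vv\neq0$ if and only if $\Zv\vv\neq0$, matching the nonvanishing clause in ``eigenvector'' on both sides.

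I expect the only real obstacle to be the careful handling of $\Sigmav^\dagger$ in the rank-deficient case, i.e., when $\sqrt{\Wv}\Psiv_X$ does not have full row rank and $\Pv\neq\Iv_M$; this is precisely the situation the hypothesis $\lambda\neq0$ is designed to tame, by pushing eigenvectors into $\mathrm{range}(\Pv)$. If one instead assumes $\sqrt{\Wv}\Psiv_X$ has full rank $M$ — the generic case after pruning redundant dictionary functions — then $\Sigmav$ is invertible, $\Pv=\Iv_M$, $\widehat{\Kv}=\Zv^*\Kv\Zv$, and the proposition reduces to the exact intertwining $\Kv\Zv=\Zv\widehat{\Kv}$, from which the equivalence is immediate for all $\lambda$. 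The remaining work — verifying the identity $\widehat{\Kv}=\Cv\Zv\Pv$ by direct substitution and the product-pseudoinverse formula — is routine.
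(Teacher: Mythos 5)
The paper does not actually prove this proposition --- it is imported verbatim as Proposition~1 of the kernelized EDMD paper of Williams et al., so there is no in-paper argument to compare against; your proof therefore stands or falls on its own, and it is correct and complete. The key identities all check out: with $\Cv=\Sigmav^\dagger\Qv^*\sqrt{\Wv}\Psiv_Y$ one has $\Kv=(\sqrt{\Wv}\Psiv_X)^\dagger\sqrt{\Wv}\Psiv_Y=\Zv\Cv$ (the formula $(\Qv\Sigmav\Zv^*)^\dagger=\Zv\Sigmav^\dagger\Qv^*$ is verified directly from the Moore--Penrose axioms), and substituting $\Psiv_X^*\sqrt{\Wv}=\Zv\Sigmav\Qv^*$ gives $\widehat{\Kv}=\Cv\Zv\Sigmav\Sigmav^\dagger=\Bv\Pv$ with $\Bv=\Cv\Zv$, $\Pv=\Sigmav\Sigmav^\dagger$, while $\Pv\Bv=\Bv$ follows from $\Sigmav\Sigmav^\dagger\Sigmav^\dagger=\Sigmav^\dagger$ and $\Kv\Zv=\Zv\Bv$ is immediate. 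Your two reductions --- $\widehat{\Kv}\vv=\lambda\vv\Rightarrow\Pv\vv=\vv$ and $\Kv\Zv\vv=\lambda\Zv\vv\Rightarrow\Bv\vv=\lambda\vv\Rightarrow\Pv\vv=\vv$ (both using $\lambda\neq0$), after which $\widehat{\Kv}\vv=\Bv\vv$ and injectivity of $\Zv$ close the biconditional and the nonvanishing of the eigenvectors --- are exactly where the hypothesis $\lambda\neq0$ is needed, and handling the rank-deficient case through the projection $\Pv$ is a point that many presentations (including the original) gloss over by tacitly assuming $\Sigmav$ invertible; your closing remark that full rank gives the exact intertwining $\Kv\Zv=\Zv\widehat{\Kv}$ with $\widehat{\Kv}=\Zv^*\Kv\Zv$, valid for all $\lambda$, is also correct and matches how the identity is used later in Section~4.3 of the paper.
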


To make use of this proposition, let 
$$
\widehat{\Gv}=\sqrt{\Wv}\Psiv_X\Psiv_X^*\sqrt{\Wv}\in\mathbb{C}^{M\times M},\qquad \widehat{\Av}=\sqrt{\Wv}\Psiv_Y\Psiv_X^*\sqrt{\Wv}\in\mathbb{C}^{M\times M}.
$$
The matrices $\Qv$ and $\Sigmav$ in \cref{prop_kern_EDMD} can be recovered from the eigenvalue decomposition $\widehat{\Gv}=\Qv\Sigmav^2\Qv^*$. 
Moreover, both matrices $\widehat{\Gv}$ and $\widehat{\Av}$ can be computed using inner products:
	\begin{equation}
	\label{kern_trick1}
\widehat{\Gv}_{jk}=\sqrt{w_j}\Psiv(\xv^{(j)})\Psiv(\xv^{(k)})^*\sqrt{w_k},\qquad
\widehat{\Av}_{jk}=\sqrt{w_j}\Psiv(\yv^{(j)})\Psiv(\xv^{(k)})^*\sqrt{w_k},
	\end{equation}
where we recall that $\Psiv(\xv)$ is a row vector of the dictionary evaluated at $\xv$.\footnote{This is the transpose of the convention in~\cite{williams2015kernel}.} Kernelized EDMD applies the kernel trick to compute the inner products in \eqref{kern_trick1} in an implicitly defined reproducing Hilbert space $\mathcal{H}$ with inner product $\langle\cdot,\cdot\rangle_{\mathcal{H}}$~\cite{scholkopf2001kernel}. A positive-definite kernel function $\mathcal{S}:\Omega\times \Omega\rightarrow\mathbb{R}$ induces a feature map $\varphi:\mathbb{R}^d\rightarrow\mathcal{H}$ so that
$$
\langle\varphi(\xv),\varphi(\xv')\rangle_{\mathcal{H}}=\mathcal{S}(\xv,\xv').
$$
Often $\mathcal{S}$ can be evaluated in $\mathcal{O}(d)$ operations.
This leads to a choice of dictionary (equivalently, a reweighted feature map) so that the matrices
$$
\widehat{\Gv}_{jk}=\mathcal{S}(\xv^{(j)},\xv^{(k)}),\qquad \widehat{\Av}_{jk}=\mathcal{S}(\yv^{(j)},\xv^{(k)})
$$
can be computed in $\mathcal{O}(dM^2)$ operations. $\widehat{\Kv}$ can thus be constructed in $\mathcal{O}(dM^2)$ operations, a considerable saving, with a significant reduction in memory consumption.

Different choices of kernel result in different matrices $\widehat{\Kv}$, and in general different eigenvectors and eigenfunctions. Common choices of the kernel $\mathcal{S}$ include:
\begin{itemize}
	\item Polynomial kernel: $\mathcal{S}(\xv,\xv')=(\xv'^*\xv/c^2+1)^\alpha$ for a scaling factor $c$ and power $\alpha$;
	\item Gaussian kernel: $\mathcal{S}(\xv,\xv')=\exp(-\|\xv-\xv'\|^2/c^2)$;
	\item Laplacian kernel: $\mathcal{S}(\xv,\xv')=\exp(-\|\xv-\xv'\|/c)$;
	\item Lorentzian kernel: $\mathcal{S}(\xv,\xv')=(1+\|\xv-\xv'\|^2/c^2)^{-1}$.
\end{itemize}
One way of viewing kernelized EDMD is that it is EDMD with the new feature map
$
\xv\mapsto \Psiv(\xv)\Zv.
$
We might not want to include the whole SVD of $\sqrt{\Wv}\Psiv_X$ and instead keep only $r$ leading principal components. This often helps avoid spurious unstable eigenvalues that DMD and EDMD can produce. The process is summarized in \cref{alg:kEDMD_vanilla}.

\begin{algorithm}[t]
\textbf{Input:} Snapshot data $\Xv\in\mathbb{R}^{d\times M}$ and $\Yv\in\mathbb{R}^{d\times M}$, kernel $\mathcal{S}$, rank $r\in\mathbb{N}$. \\
\vspace{-4mm}
\begin{algorithmic}[1]
\State Compute the matrices $\widehat{\Gv}\in\mathbb{R}^{M\times M}$ and $\widehat{\Av}\in\mathbb{R}^{M\times M}$ via
$
\widehat{\Gv}_{jk}=\mathcal{S}(\xv^{(j)},\xv^{(k)}), \widehat{\Av}_{jk}=\mathcal{S}(\yv^{(j)},\xv^{(k)}).
$
\State Compute the eigendecomposition $\widehat{\Gv}=\Qv\Sigmav^2\Qv^*$.
\State Let $\widehat{\Sigmav}=\Sigmav(1:r,1:r)\in\mathbb{R}^{r\times r}$ and $\widehat{\Qv}=\Qv(:,1:r)\in\mathbb{R}^{M\times r}$ be the matrices of the $r$ dominant eigenvalues and eigenvectors, respectively.
\State Compute the compression $\widehat{\Kv}=(\widehat{\Sigmav}^\dagger \widehat{\Qv}^*)\widehat{\Av}(\widehat{\Qv}\widehat{\Sigmav}^\dagger)\in\mathbb{R}^{r\times r}$.
\end{algorithmic} \textbf{Output:} Matrix $\widehat{\Kv}$.
\caption{The kernelized EDMD algorithm \cite{williams2015kernel}.}
\label{alg:kEDMD_vanilla}
\end{algorithm}

\subsection{Kernelized ResDMD: Galerkin perspective}

Our question now shifts to how to use the kernel trick with ResDMD. A first attempt may be to consider the residual with respect to the new feature map $\Psiv\Zv$. That is, for a vector $\vv\in \mathbb{C}^{M}$ and $\lambda\in\mathbb{C}$, consider the squared relative residual
\begin{equation}
\label{biagbfvuibi}
\frac{\vv^*[(\Psiv_Y\Zv)^*\Wv(\Psiv_Y\Zv)\!-\!\lambda (\Psiv_Y\Zv)^*\Wv(\Psiv_X\Zv)\!-\!\overline{\lambda}(\Psiv_X\Zv)^*\Wv(\Psiv_Y\Zv)\!+\!|\lambda|^2(\Psiv_X\Zv)^*\Wv(\Psiv_X\Zv)]\vv}{\vv^*(\Psiv_X\Zv)^*\Wv(\Psiv_X\Zv)\vv^*}.
\end{equation}
However, this is not a good choice in the regime $M\leq N$. The following proposition from \cite{colbrook2021rigorous} is analogous to \cref{prop_first_res_rubbish}.

\begin{proposition}
Suppose that $M\leq N$ and $\sqrt{\Wv}\Psiv_X$ has rank $M$ (independent rows). For any eigenvalue-eigenvector pair $\lambda$ and $\vv\in \mathbb{C}^{M}$ of $\widehat{\Kv}$, the residual in \eqref{biagbfvuibi} vanishes.
\end{proposition}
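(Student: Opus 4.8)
The plan is to transport the argument of \cref{prop_first_res_rubbish} to coefficient space via the isometry $\Zv$. First I would recognize the numerator of \eqref{biagbfvuibi} as a single squared norm: writing $\Bv=\sqrt{\Wv}\Psiv_X\Zv$ and $\Cv=\sqrt{\Wv}\Psiv_Y\Zv$, the bracketed matrix is exactly $(\Cv-\lambda\Bv)^*(\Cv-\lambda\Bv)$, so the numerator equals $\|(\Cv-\lambda\Bv)\vv\|_{\ell^2}^2$. Hence it suffices to show that $(\sqrt{\Wv}\Psiv_Y\Zv-\lambda\sqrt{\Wv}\Psiv_X\Zv)\vv=0$ whenever $\widehat{\Kv}\vv=\lambda\vv$, and then to check the denominator is nonzero.

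The key computations are two identities. The rank hypothesis means that in the SVD $\sqrt{\Wv}\Psiv_X=\Qv\Sigmav\Zv^*$ of \cref{prop_kern_EDMD} the matrix $\Sigmav\in\mathbb{R}^{M\times M}$ is actually invertible (so $\Sigmav^\dagger=\Sigmav^{-1}$), $\Qv$ is unitary, and $\Zv$ is a genuine isometry with $\Zv^*\Zv=\Iv_M$; this gives the first identity $\sqrt{\Wv}\Psiv_X\Zv=\Qv\Sigmav\Zv^*\Zv=\Qv\Sigmav$. For the second, I would unwind the definition $\widehat{\Kv}=(\Sigmav^\dagger\Qv^*)(\sqrt{\Wv}\Psiv_Y\Psiv_X^*\sqrt{\Wv})(\Qv\Sigmav^\dagger)$, substitute $\sqrt{\Wv}\Psiv_Y\Psiv_X^*\sqrt{\Wv}=(\sqrt{\Wv}\Psiv_Y)\Zv\Sigmav\Qv^*$, and cancel using unitarity of $\Qv$ and invertibility of $\Sigmav$ to obtain $\widehat{\Kv}=\Sigmav^{-1}\Qv^*(\sqrt{\Wv}\Psiv_Y)\Zv$, i.e.\ $\sqrt{\Wv}\Psiv_Y\Zv=\Qv\Sigmav\widehat{\Kv}$. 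Combining the two identities,
$$
\left(\sqrt{\Wv}\Psiv_Y\Zv-\lambda\sqrt{\Wv}\Psiv_X\Zv\right)\vv=\Qv\Sigmav\left(\widehat{\Kv}-\lambda\Iv_M\right)\vv=0
$$
for any eigenpair $(\lambda,\vv)$ of $\widehat{\Kv}$, so the numerator of \eqref{biagbfvuibi} vanishes; and since $\Sigmav,\Qv$ are invertible, the denominator $\vv^*(\Psiv_X\Zv)^*\Wv(\Psiv_X\Zv)\vv=\|\Sigmav\vv\|_{\ell^2}^2$ is nonzero for $\vv\neq 0$, so the relative residual is $0$.

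I do not expect a real obstacle here; the proof is a short computation paralleling \cref{prop_first_res_rubbish}. The one point that requires care — the genuine crux — is making sure the full-rank assumption is invoked exactly where it is needed, namely to pass from $\Zv$ being an isometry (the thin SVD factor) to $\Zv^*\Zv=\Iv_M$ and to $\Sigmav$ being truly invertible rather than merely pseudo-invertible; this is the analogue of the observation in the discussion after \cref{prop_first_res_rubbish} that $\Xv^\dagger\Xv$ acts as the identity on $\mathbb{R}^{M}$. Everything else is bookkeeping with the SVD.
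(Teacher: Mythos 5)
Your proof is correct and follows exactly the route the paper intends: the paper states this proposition without proof (citing the original ResDMD paper) as the analogue of \cref{prop_first_res_rubbish}, and your argument is precisely the kernelized version of that proof, rewriting the numerator as $\|(\sqrt{\Wv}\Psiv_Y\Zv-\lambda\sqrt{\Wv}\Psiv_X\Zv)\vv\|_{\ell^2}^2$ and using the SVD identities $\sqrt{\Wv}\Psiv_X\Zv=\Qv\Sigmav$ and $\sqrt{\Wv}\Psiv_Y\Zv=\Qv\Sigmav\widehat{\Kv}$. You also correctly pinpoint where the rank-$M$ hypothesis enters (invertibility of $\Sigmav$, nonvanishing denominator), so there is nothing to fix.
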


In other words, the restriction $M\leq N$ prevents the large data convergence $(M\rightarrow\infty)$ for a fixed $N$. One way to overcome this issue is to split the snapshot data into two sets: one that is used to compute a suitable dictionary and another to evaluate the dictionary at data points for quadrature. In particular, we can evaluate the new dictionary at any point $\xv\in\Omega$ via
$$
\sqrt{\Wv}\Psiv(\xv)\Zv=\sqrt{\Wv}\Psiv(\xv)(\sqrt{\Wv}\Psiv_X)^*\Qv\Sigmav^{\dagger}=\begin{bmatrix}
\mathcal{S}(\xv,\xv^{(1)})&\mathcal{S}(\xv,\xv^{(2)})&\cdots & \mathcal{S}(\xv,\xv^{(M)})
\end{bmatrix}\Qv\Sigmav^{\dagger}.
$$
The method of using two subsets of snapshots is explored extensively in \cite{colbrook2021rigorous,colbrook2023residualJFM,colbrook2023beyond} (including for exact DMD) and is not the focus of this paper.

\subsection{Kernelized ResDMD: Regression perspective}

Following \cref{sec:exact_DMD_res}, there is another way to associate a residual with a candidate eigenpair of $\widehat{\Kv}$. Suppose we take general $r\leq M$ in \cref{alg:kEDMD_vanilla}. We first define
$$
\widehat{\Psiv}_X=(\sqrt{\Wv}\Psiv_X)^*\widehat{\Qv}\widehat{\Sigmav}^{\dagger}=\Zv(:,1:r),\qquad \widehat{\Psiv}_Y=(\sqrt{\Wv}\Psiv_Y)^*\widehat{\Qv}\widehat{\Sigmav}^{\dagger}.
$$
Let $\widehat{\Zv}=\Zv(:,1:r)$, then we note that
$$
\widehat{\Psiv}_X^\dagger\widehat{\Psiv}_Y=\widehat{\Zv}^*(\sqrt{\Wv}\Psiv_Y)^*\widehat{\Qv}\widehat{\Sigmav}^{\dagger}=(\widehat{\Sigmav}^\dagger \widehat{\Qv}^*)\left(\sqrt{\Wv}\Psiv_X\Psiv_Y^*\sqrt{\Wv}\right)(\widehat{\Qv}\widehat{\Sigmav}^{\dagger})=\widehat{\Kv}^*.
$$
It follows that $\widehat{\Kv}^*$ is a solution to the least squares problem
\begin{equation}
\label{kEDMD_opt_reg}
\min_{\Mv\in\mathbb{C}^{r\times r}} \left\|\widehat{\Psiv}_Y-\widehat{\Psiv}_X\Mv\right\|_{\mathrm{F}}.
\end{equation}
Suppose that $(\lambda,\vv)\in\mathbb{C}\times \mathbb{C}^r$ is a \textit{left} eigenvector pair of $\widehat{\Kv}$, i.e.,
$
\vv^*\widehat{\Kv}=\lambda \vv^*.
$
Then
$
\widehat{\Kv}^*\vv=\overline{\lambda} \vv.
$
Hence, we consider the relative residual
\begin{align*}
{\left\|\widehat{\Psiv}_Y\vv-\overline{\lambda}\widehat{\Psiv}_X\vv\right\|_{\ell^2}}\big/{\left\|\vv\right\|_{\ell^2}}&=\sqrt{(\widehat{\Qv}\widehat{\Sigmav}^{\dagger}\vv)^*\left[\sqrt{\Wv}\Psiv_Y\Psiv_Y^*\sqrt{\Wv}-\lambda \widehat{\Av}^*-\overline{\lambda}\widehat{\Av}+|\lambda|^2\widehat{\Gv}\right](\widehat{\Qv}\widehat{\Sigmav}^{\dagger}\vv)}\Big/{\left\|\vv\right\|_{\ell^2}}\\
&=\sqrt{\vv^*\widehat{\Lv}\vv
-\lambda \vv^*\widehat{\Kv}^*\vv
-\overline{\lambda}\vv^*\widehat{\Kv}\vv
+|\lambda|^2\vv^*\vv}\Big/{\left\|\vv\right\|_{\ell^2}}
\end{align*}
Here, we have used $\|\widehat{\Psiv}_X\vv\|_{\ell^2}=\|\widehat{Z}\vv\|_{\ell^2}=\|\vv\|_{\ell^2}$ in the normalization, defined the matrix
$$
\widehat{\Lv}=(\widehat{\Qv}\widehat{\Sigmav}^{\dagger})^*\sqrt{\Wv}\Psiv_Y\Psiv_Y^*\sqrt{\Wv}(\widehat{\Qv}\widehat{\Sigmav}^{\dagger})\in\mathbb{C}^{r\times r}
$$
and used the fact that
$$
\widehat{\Psiv}_X^*\widehat{\Psiv}_X=(\widehat{\Qv}\widehat{\Sigmav}^{\dagger})^*\widehat{\Gv}(\widehat{\Qv}\widehat{\Sigmav}^{\dagger})=\Iv_r,\quad
\widehat{\Psiv}_Y^*\widehat{\Psiv}_X=(\widehat{\Qv}\widehat{\Sigmav}^{\dagger})^*\widehat{\Av}(\widehat{\Qv}\widehat{\Sigmav}^{\dagger})=\widehat{\Kv}.
$$
Notice that, as well as $\widehat{\Kv}$, the matrix $\widehat{\Lv}$ can also be kernelized since
$$
[\sqrt{\Wv}\Psiv_Y\Psiv_Y^*\sqrt{\Wv}]_{jk}=\mathcal{S}(\yv^{(j)},\yv^{(k)}).
$$
\begin{algorithm}[t]
\textbf{Input:} Snapshot data $\Xv\in\mathbb{R}^{d\times M}$ and $\Yv\in\mathbb{R}^{d\times M}$, kernel $\mathcal{S}$, rank $r\in\mathbb{N}$. \\
\vspace{-4mm}
\begin{algorithmic}[1]
\State Run kernelized EDMD (\cref{alg:kEDMD_vanilla}).
\State Compute the matrix $\widehat{\Lv}=(\widehat{\Qv}\widehat{\Sigmav}^{\dagger})^*\Mv(\widehat{\Qv}\widehat{\Sigmav}^{\dagger})$, where $\Mv_{jk}=\mathcal{S}(\yv^{(j)},\yv^{(k)})$.
\State Compute the eigendecomposition
$$
\widehat{\Kv}\Vv_{R}=\Vv_{R}\mathbf{\Lambda},\quad
\Vv_{L}^*\widehat{\Kv}=\mathbf{\Lambda}\Vv_{L}^*.
$$
The columns of $\Vv_{L}=[\vv_{1,L}\cdots \vv_{r,L}]$ and $\Vv_{R}=[\vv_{1,R}\cdots \vv_{r,R}]$ are (left and right) eigenvector coefficients and $\mathbf{\Lambda}$ is a diagonal matrix of eigenvalues.
\State For each eigen-triple $(\lambda_j,\vv_{j,L},\vv_{j,R})$, compute the residual
$$
r_j=\mathrm{res}(\lambda_j,\vv_{j,L},\vv_{j,R})=\sqrt{\frac{\vv_{j,L}^*\left[\widehat{\Lv}-\lambda \widehat{\Kv}^*-\overline{\lambda}\widehat{\Kv}+|\lambda|^2\Iv_r\right]\vv_{j,L}}{\vv_{j,L}^*\vv_{j,L}}}=\sqrt{\frac{\vv_{j,L}^*\widehat{\Lv}\vv_{j,L}}{\|\vv_{j,L}\|_{\ell^2}^2}-|\lambda|^2}
$$
\end{algorithmic} \textbf{Output:} Matrix $\widehat{\Kv}$, the eigenvalues $\mathbf{\Lambda}$ and eigenvector coefficients $\Vv_{L},\Vv_{R}$ and residuals $\{r_j\}_{j=1}^r$.\caption{The kernelized ResDMD algorithm for validating eigenpairs. Given a threshold $\epsilon>0$, we recommend discarding eigen-triples $(\lambda_j,\vv_{j,L},\vv_{j,R})$ with $r_j>\epsilon$.}
\label{alg:kernel_ResDMD1}
\end{algorithm}

\begin{algorithm}[t]
\textbf{Input:}  Snapshot data $\Xv\in\mathbb{R}^{d\times M}$, $\Yv\in\mathbb{R}^{d\times M}$, kernel $\mathcal{S}$, rank $r\in\mathbb{N}$, accuracy goal $\epsilon>0$, grid $z_1,\ldots,z_k\in\mathbb{C}$.\\
\vspace{-4mm}
\begin{algorithmic}[1]
\State Run kernelized EDMD (\cref{alg:kEDMD_vanilla}).
\State Compute the matrix $\widehat{\Lv}=(\widehat{\Qv}\widehat{\Sigmav}^{\dagger})^*\Mv(\widehat{\Qv}\widehat{\Sigmav}^{\dagger})$, where $\Mv_{jk}=\mathcal{S}(\yv^{(j)},\yv^{(k)})$.
\State For each $z_j$, compute
$$
\tau_j=\min_{\vv\in\mathbb{C}^r} {\sqrt{\vv^*[\widehat{\Lv}-z_j\widehat{\Kv}^*-\overline{z_j}\widehat{\Kv}+|z_j|^2\Iv_r]\vv}}\Big/{\|\vv\|_{\ell^2}},
$$
which is an SVD problem, and the corresponding (right) singular vectors $\vv_j$.
\end{algorithmic} \textbf{Output:} Approximate $\epsilon$-pseudospectrum $\{z_j: \tau_j<\epsilon\}$, approximate eigenfunctions $\{\vv_j: \tau_j<\epsilon\}$.\caption{ResDMD algorithm using kernelized EDMD for computation of pseudospectra.}
\label{alg:kernel_ResDMD2}
\end{algorithm}

\begin{algorithm}[t]
\textbf{Input:}  Snapshot data $\Xv\in\mathbb{R}^{d\times M}$, $\Yv\in\mathbb{R}^{d\times M}$, kernel $\mathcal{S}$, rank $r\in\mathbb{N}$, $\lambda\in\mathbb{C}$.\\
\vspace{-4mm}
\begin{algorithmic}[1]
\State Run kernelized EDMD (\cref{alg:kEDMD_vanilla}).
\State Compute the matrix $\widehat{\Lv}=(\widehat{\Qv}\widehat{\Sigmav}^{\dagger})^*\Mv(\widehat{\Qv}\widehat{\Sigmav}^{\dagger})$, where $\Mv_{jk}=\mathcal{S}(\yv^{(j)},\yv^{(k)})$.
\State Compute
$$
r=\min_{\vv\in\mathbb{C}^r} {\sqrt{\vv^*[\widehat{\Lv}-\lambda\widehat{\Kv}^*-\overline{\lambda}\widehat{\Kv}+|\lambda|^2\Iv_r]\vv}}\Big/{\|\vv\|_{\ell^2}},
$$
which is an SVD problem, and the corresponding (right) singular vector $\vv$.
\end{algorithmic} \textbf{Output:} 
Residual $r$ and approximate eigenfunction $\vv$.\caption{ResDMD algorithm using kernelized EDMD for computation of approximate eigenfunction.}
\label{alg:kernel_ResDMD3}
\end{algorithm}

As in the case of exact DMD, this residual naturally leads to two different flavors of algorithms:
\begin{itemize}
	\item We can validate eigenpairs of kernelized EDMD and disregard spurious eigenpairs. The process is summarized in \cref{alg:kernel_ResDMD1}. If $\vv^*\widehat{\Kv}=\lambda\vv^*$, then the residual simplifies to
	$$
	\sqrt{\frac{\vv^*\widehat{\Lv}\vv}{\|\vv\|_{\ell^2}^2}-|\lambda|^2}
	$$
We recommend discarding eigen-triples with a residual larger than a threshold $\epsilon>0$.
	\item We can compute pseudospectra and for any $\lambda\in\mathbb{C}$, compute an approximate eigenfunction by selecting $\vv$ to minimize the residual. This process is summarized in \cref{alg:kernel_ResDMD2,alg:kernel_ResDMD3}.
\end{itemize}

\section{Three examples}
\label{sec:examples}

We now present three examples of the new residuals for ResDMD. The first and last use kernelized EDMD, and the second uses exact DMD.

\subsection{Example 1: Cylinder wake}

We first consider the classic example of a low Reynolds number flow past a circular cylinder. This is one of the most studied examples in modal-analysis techniques \cite[Table 3]{rowley2017model}\cite{chen2012variants,taira2020modal}. $Re=100$ is chosen so that it is larger than the critical Reynolds number at which the flow undergoes a supercritical Hopf bifurcation, resulting in laminar vortex shedding \cite{jackson1987finite,zebib1987stability}. This limit cycle is stable and is representative of the three-dimensional flow \cite{noack1994global,noack2003hierarchy}. The Koopman operator of the post-transient flow has a pure point spectrum with a lattice structure on the unit circle \cite{bagheri2013koopman}.

We numerically compute the velocity field of a flow around a circular cylinder of diameter $D=1$ using an incompressible, two-dimensional lattice-Boltzmann solver \cite{jozsa2016validation,szHoke2017performance}. The temporal resolution of the flow is chosen so that approximately 24 snapshots of the flow field correspond to the period of vortex shedding. The computational domain size is $18D$ in length and $5D$ in height, with a $800\times 200$ grid resolution. The cylinder is positioned $2D$ downstream of the inlet at the mid-height of the domain. The cylinder side walls are defined as bounce-back and no-slip walls, and a parabolic velocity profile is given at the inlet of the domain. The outlet is defined as a non-reflecting outflow. After simulations converge to steady-state vortex shedding, we collect $M$ snapshots of the vorticity field. We then use kernelized EDMD with the Laplacian kernel $\mathcal{S}(\xv,\xv')=\exp(-\|\xv-\xv'\|/c)$, where $c$ is the average $\ell_2$-norm of the mean-subtracted snapshot.

\cref{fig:cylinder1} shows the eigenvalues and residuals (computed using \cref{alg:kernel_ResDMD1}) for several values of $M$. For this example, the system is measure-preserving, and hence, we expect that $\vv_{j,L}^*\widehat{\Lv}\vv_{j,L}$ is close to $\vv_{j,L}^*\vv_{j,L}$. Hence, we expect the residuals to be close to $\sqrt{1-|\lambda|^2}$. This is verified in the bottom row of the figure. \cref{fig:cylinder2} shows the pseudospectrum computed using \cref{alg:kernel_ResDMD2}. We see convergence to the pseudospectra of the Koopman operator. In contrast, if we use the naive residual in \cref{biagbfvuibi}, the approximations of pseudospectra are completely wrong and do not allow us to detect spurious eigenvalues. This is shown in \cref{fig:cylinder3}.

\begin{figure}
\centering
\includegraphics[width=0.24\textwidth]{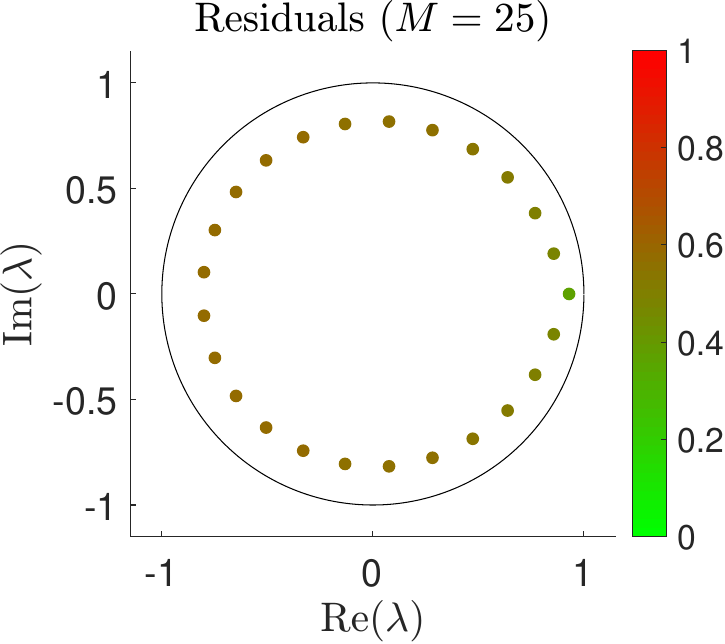}\hfill
\includegraphics[width=0.24\textwidth]{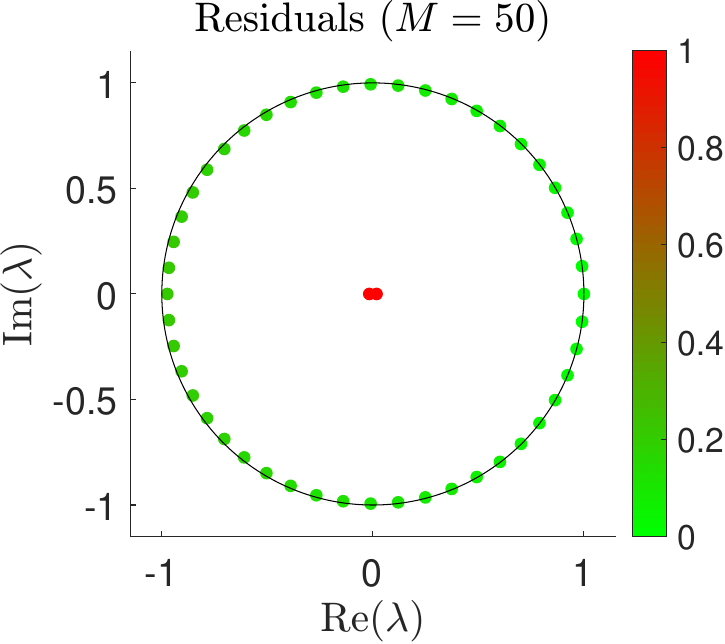}\hfill
\includegraphics[width=0.24\textwidth]{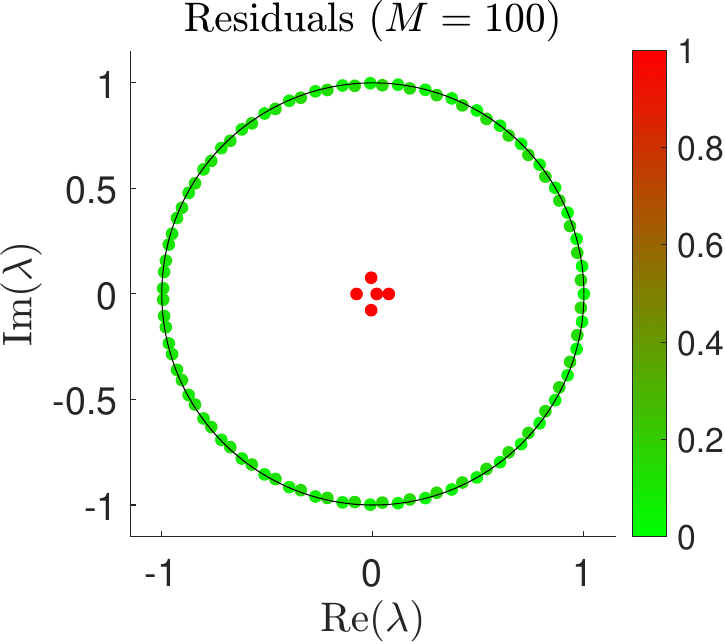}\hfill
\includegraphics[width=0.24\textwidth]{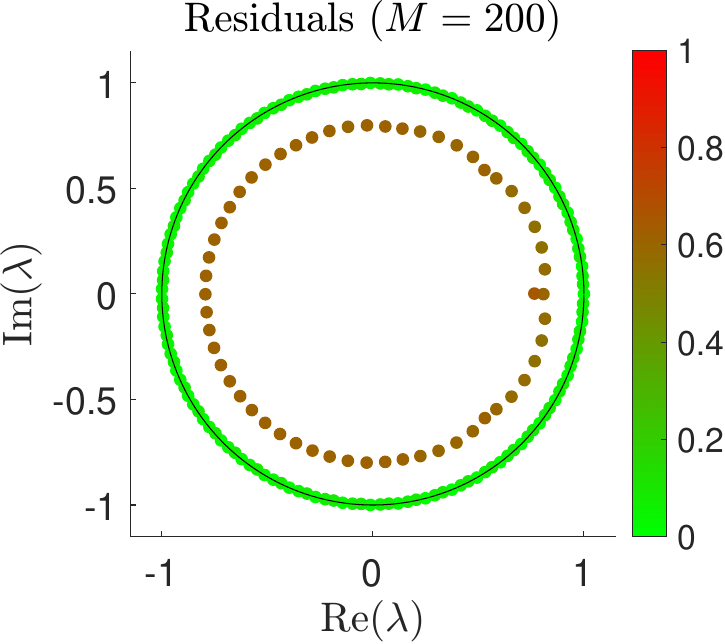}\\
\includegraphics[width=0.24\textwidth]{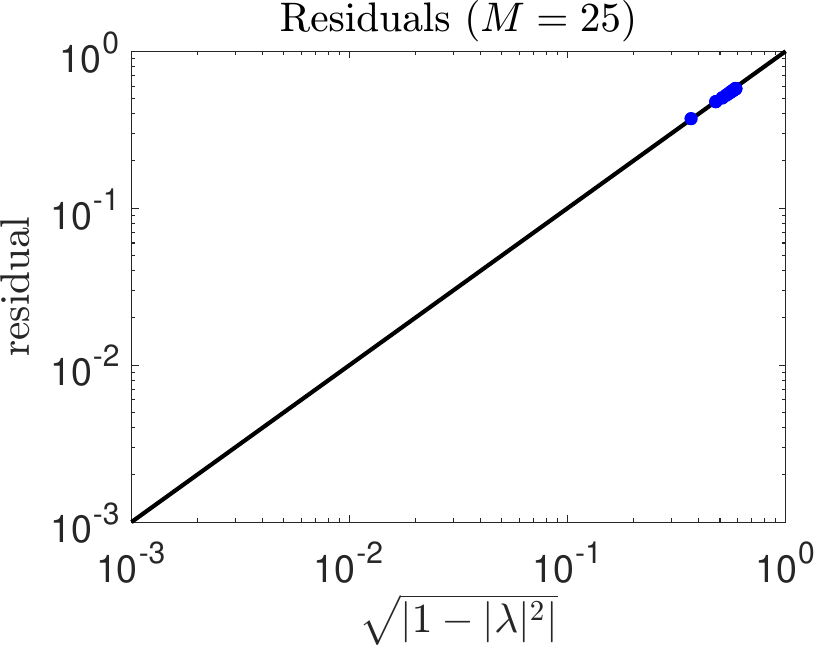}\hfill
\includegraphics[width=0.24\textwidth]{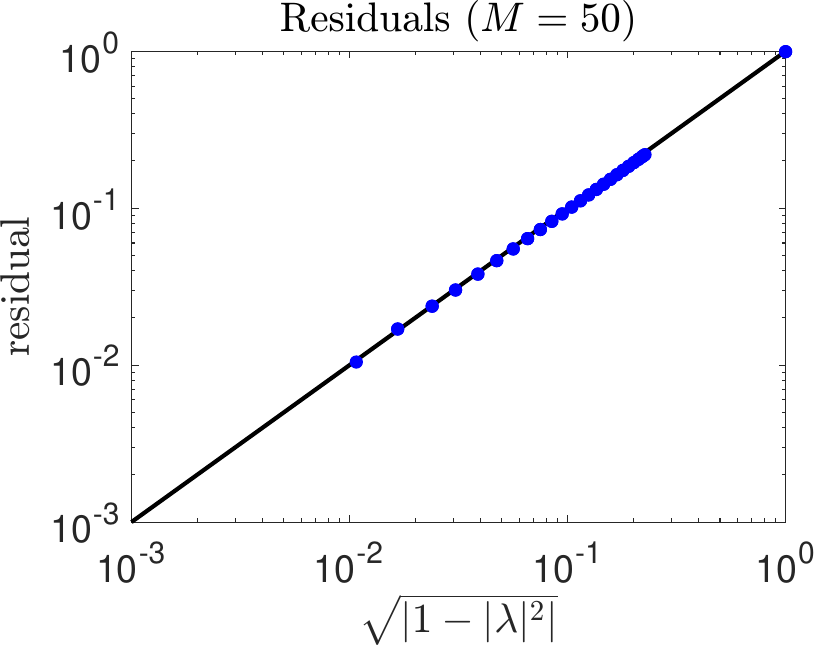}\hfill
\includegraphics[width=0.24\textwidth]{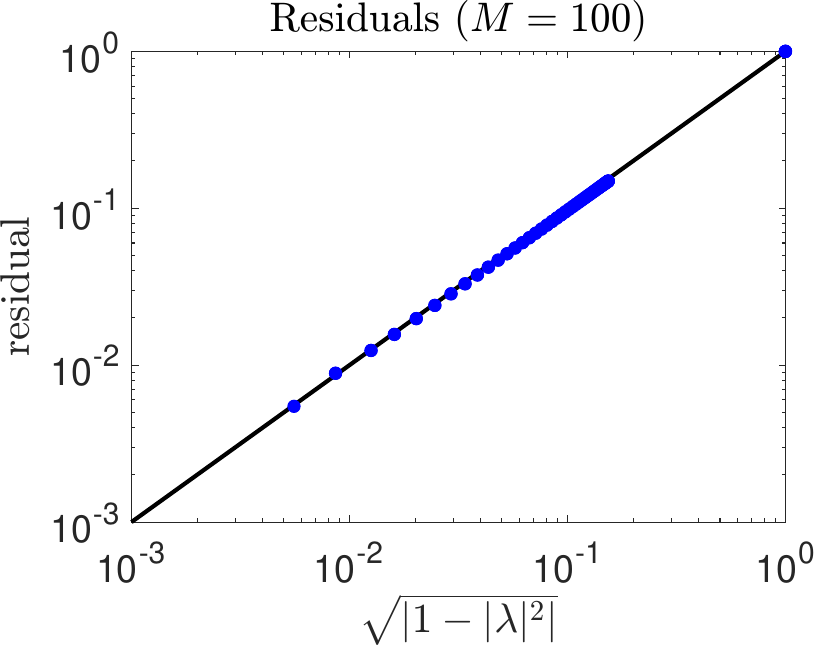}\hfill
\includegraphics[width=0.24\textwidth]{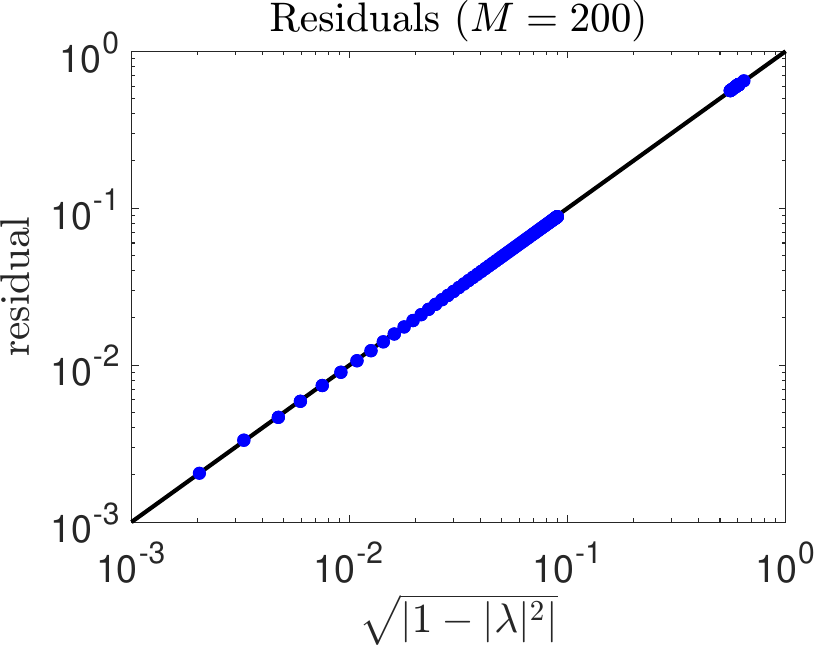}
\caption{Residuals computed using \cref{alg:kernel_ResDMD1} for the cylinder flow. The top row shows the location of the eigenvalues and the residuals. The bottom row plots the residuals against $\sqrt{1-|\lambda|^2}$ with the black line corresponding to an exact relation.}
\label{fig:cylinder1}
\end{figure}

\begin{figure}
\centering
\includegraphics[width=0.24\textwidth]{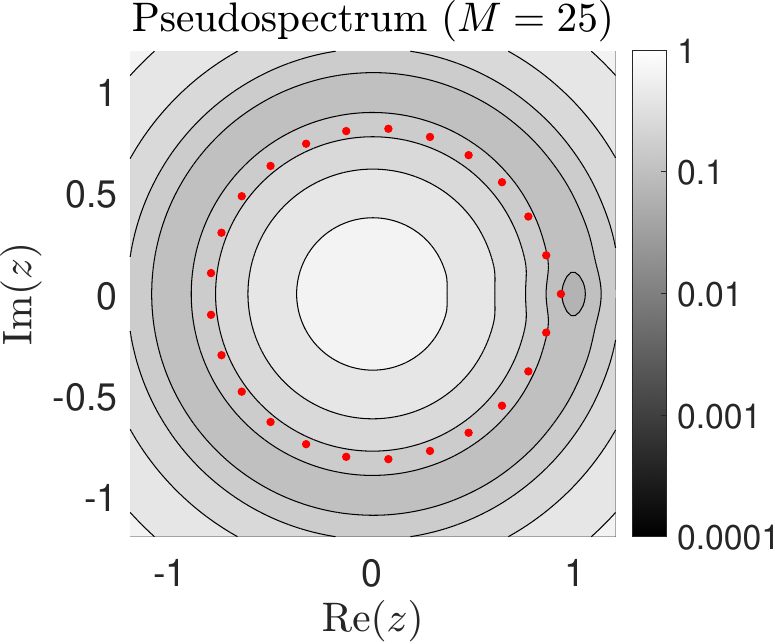}\hfill
\includegraphics[width=0.24\textwidth]{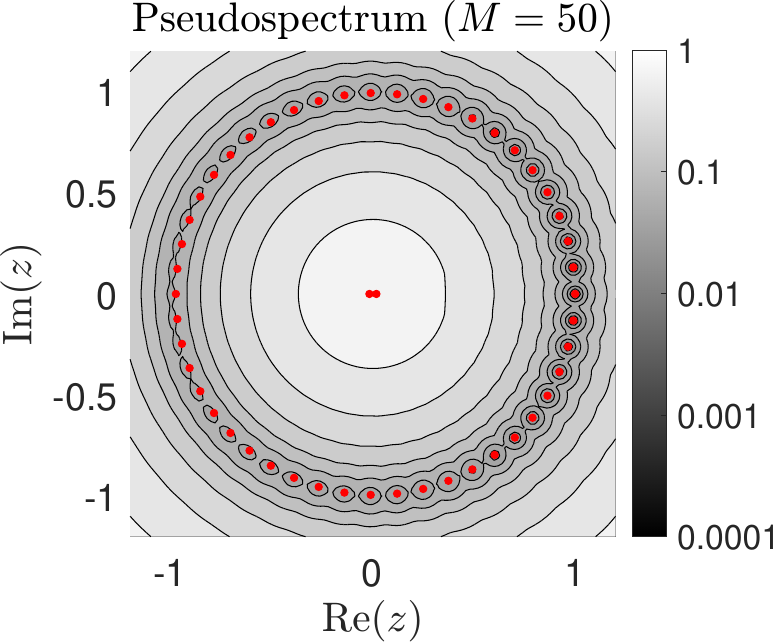}\hfill
\includegraphics[width=0.24\textwidth]{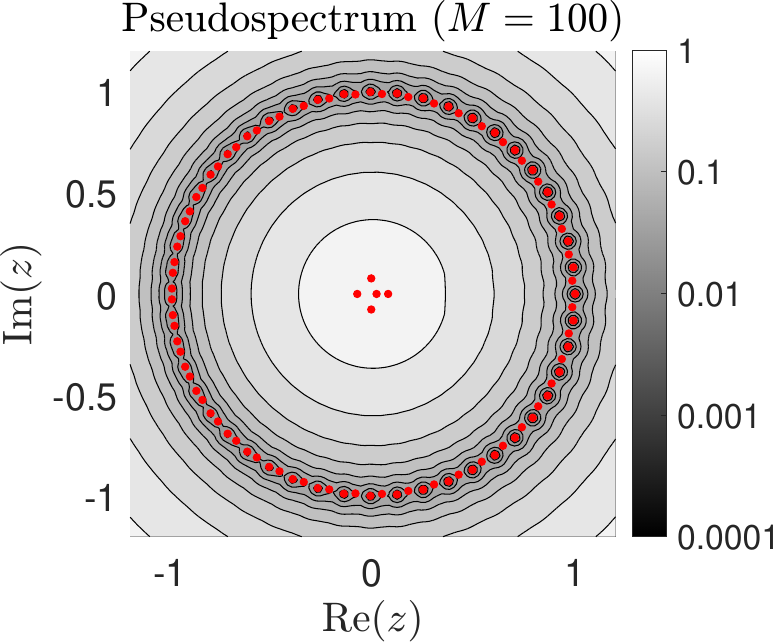}\hfill
\includegraphics[width=0.24\textwidth]{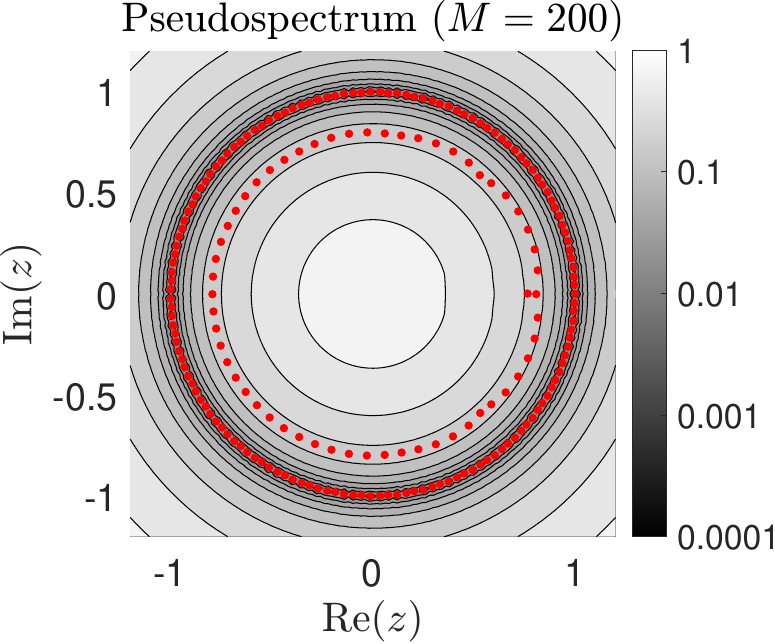}
\caption{Pseudospectra computed using \cref{alg:kernel_ResDMD2} for the cylinder flow. The shaded greyscale corresponds to the value of $\epsilon$, and the kernelized EDMD eigenvalues are shown in red, some of which are spurious. We can detect spurious eigenvalues through pseudospectra.}
\label{fig:cylinder2}
\end{figure}

\begin{figure}
\centering
\includegraphics[width=0.24\textwidth]{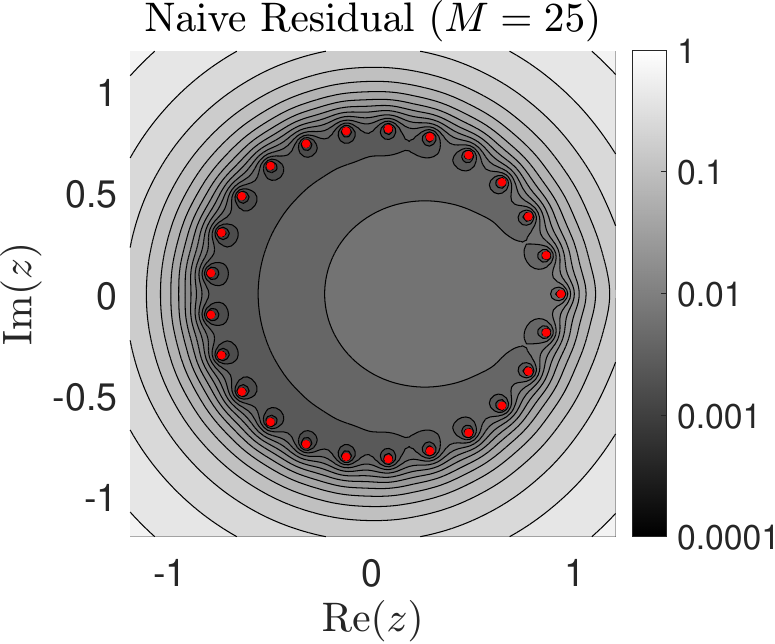}\hfill
\includegraphics[width=0.24\textwidth]{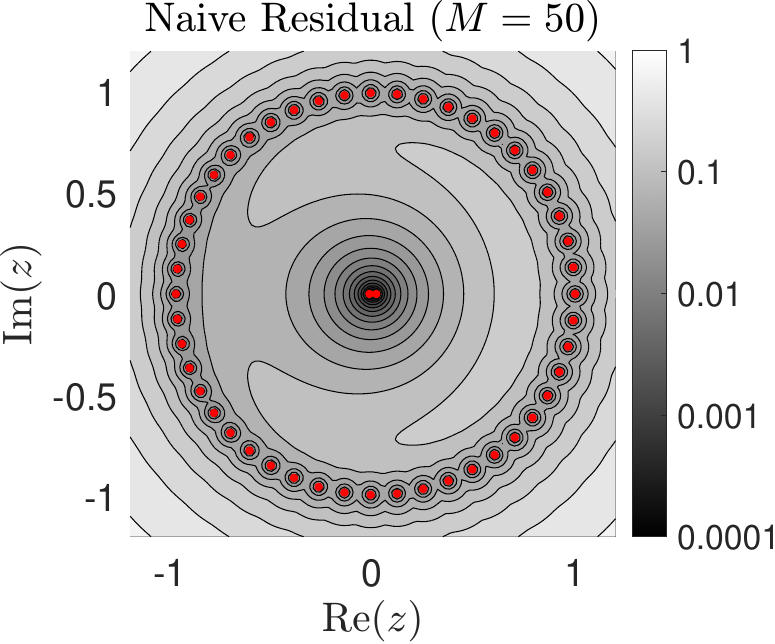}\hfill
\includegraphics[width=0.24\textwidth]{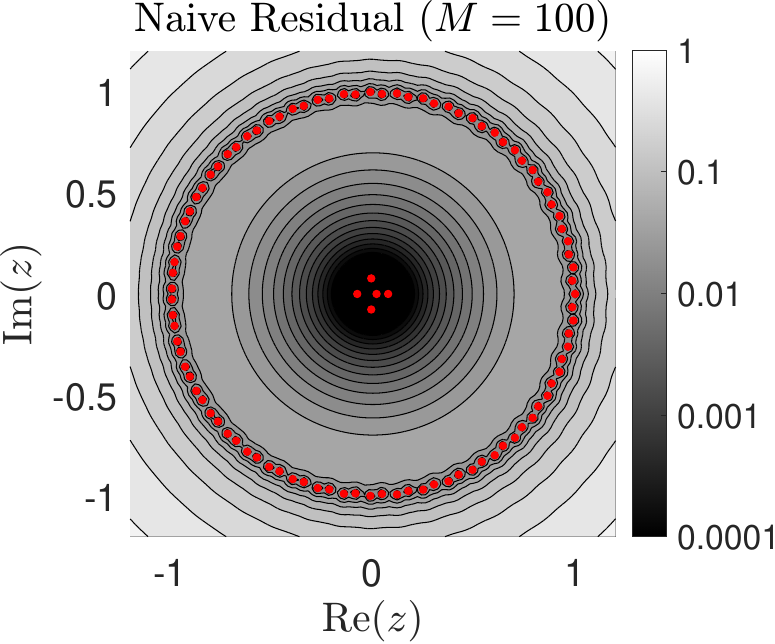}\hfill
\includegraphics[width=0.24\textwidth]{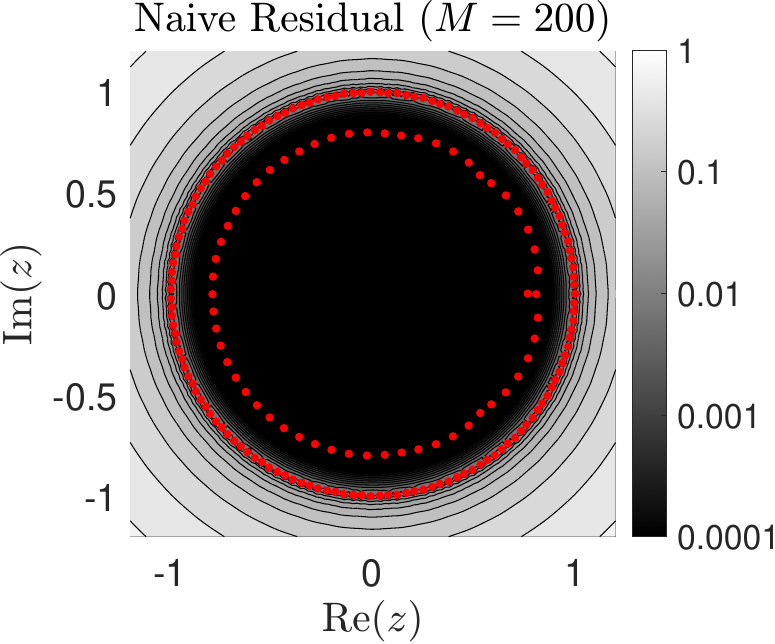}
\caption{Same as \cref{fig:cylinder2}, but now pseudospectra are computed using the naive residual in \cref{biagbfvuibi}. The approximation of the pseudospectrum is clearly wrong and we cannot detect spurious eigenvalues (without the additional knowledge that the true eigenvalues should be on the circle for this example).}
\label{fig:cylinder3}
\end{figure}

\subsection{Example 2: Verified Koopman modes of a periodic cascade of aerofoils}

\begin{figure}
\centering
\includegraphics[width=0.49\textwidth]{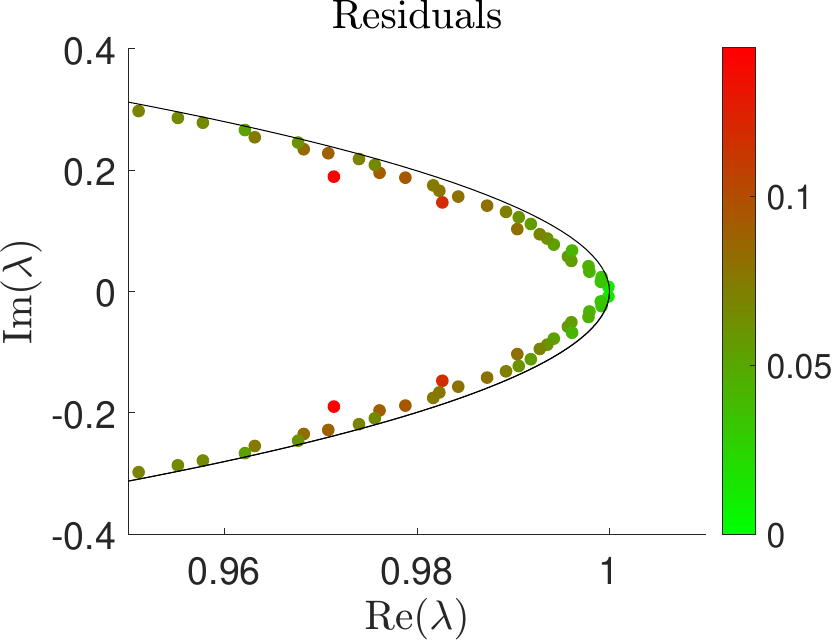}\hfill
\includegraphics[width=0.49\textwidth]{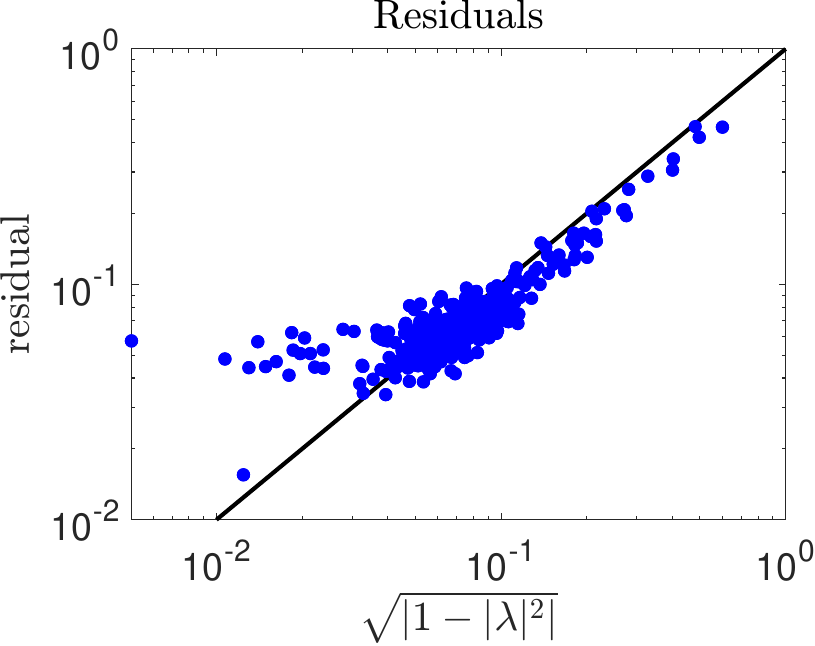}
\caption{Residuals computed using \cref{alg:exact_ResDMD1} for the periodic cascade of aerofoils. The black curve in the left panel is a portion of the unit circle.}
\label{fig:cascade_eigenvalues}
\end{figure} 

\begin{figure}
\centering
\includegraphics[width=1\textwidth]{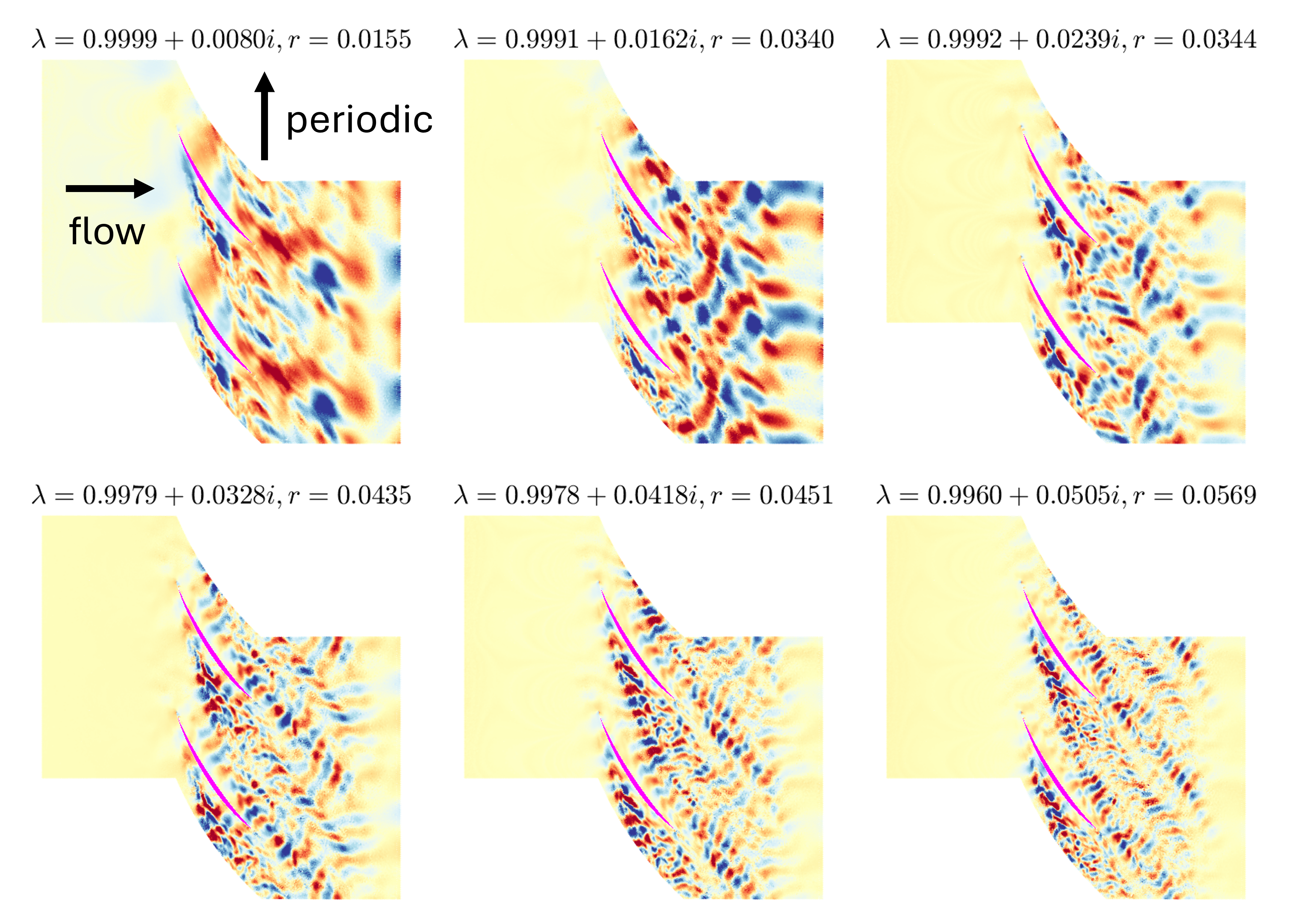}
\caption{Verified DMD modes for the periodic cascade of aerofoils. Due to conjugate symmetry, we have only shown modes with $\mathrm{Im}(\lambda)\geq 0$. The filled-in magenta regions show the positions of the aerofoils, and the setup is periodic in the vertical direction.}
\label{fig:verified_modes}
\end{figure}

Next, we consider a large-scale wall-resolved turbulent flow past a periodic cascade of aerofoils with a stagger angle $56.9^{\circ}$ and a one-sided tip gap. The setup is motivated by the need to reduce noise sources from flying objects~\cite{peake2012modern}. We use a high-fidelity simulation that solves the fully nonlinear Navier--Stokes equations~\cite{koch2021large} with Reynolds number $3.88\times10^5$ and Mach number $0.07$. The data consists of a 2D slice of the (mean-subtracted) pressure field, measured at $295,122$ points, for a single trajectory of $M=700$ snapshots sampled every $2\times 10^{-5}$s. \cref{fig:cascade_eigenvalues} shows the exact DMD eigenvalues and residuals computed using \cref{alg:exact_ResDMD1} of ResDMD. We have zoomed in close to the modes of interest near $\lambda=0$. This time, we do not see exact agreement with $\sqrt{1-|\lambda|^2}$. \cref{fig:verified_modes} shows the first six DMD modes. ResDMD allows us to compute residuals, thus providing an accuracy certificate. Similar results can be obtained using kernelized EDMD. The exact eigenvalues, modes, and residuals generally vary, depending on the chosen kernel.

\subsection{Example 3: Compression of transient shockwave}

We investigate a near-ideal acoustic monopole source in this final example and use experimental data. When a high-energy laser beam is focused on a point, the air ionizes, and plasma is generated due to the extremely high electromagnetic energy density. As a result of the sudden energy deposit, the air volume undergoes a sudden expansion that generates a shockwave.  The important acoustic characteristic is a short time period of initial supersonic propagation speed. When observed from the far field, this initial supersonic propagation is observed as a nonlinear characteristic. The data set consists of 65 realizations of laser-induced plasma sound signature data measured at a sampling rate of $f_s=$1.25 MS/s (million samples per second) \cite{szHoke2022investigating}. The data is windowed to the shockwave's support, corresponding to $123$ timesteps. We split the data into $60$ realizations to learn the kernelized EDMD matrix and five realizations to test the prediction of the Koopman operator, as shown in \cref{fig:LIP_data}. We use time delay embedding to obtain an ambient statespace dimension $d=10$. This corresponds to $M=6780$ snapshots. We apply kernelized EDMD with $r=200$ and the Gaussian kernel $\mathcal{S}(\xv,\xv')=\exp(-\|\xv-\xv'\|^2/c^2)$, where $c$ is the average $\ell_2$-norm of the mean-subtracted snapshot.

\begin{figure}
\centering
\includegraphics[width=0.5\textwidth]{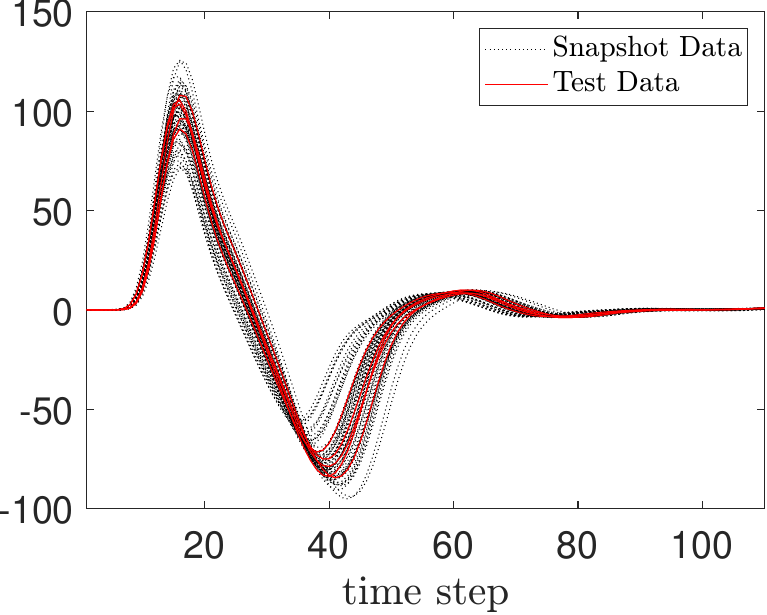}
\caption{The pressure field of the 65 realizations of the laser-induced plasma shockwave. The 6o realizations used to produce the Koopman matrix are shown in black, and the 5 randomly chosen realizations to test forecasts are shown in red.}
\label{fig:LIP_data}
\end{figure}

\begin{figure}
\centering
\includegraphics[width=0.33\textwidth]{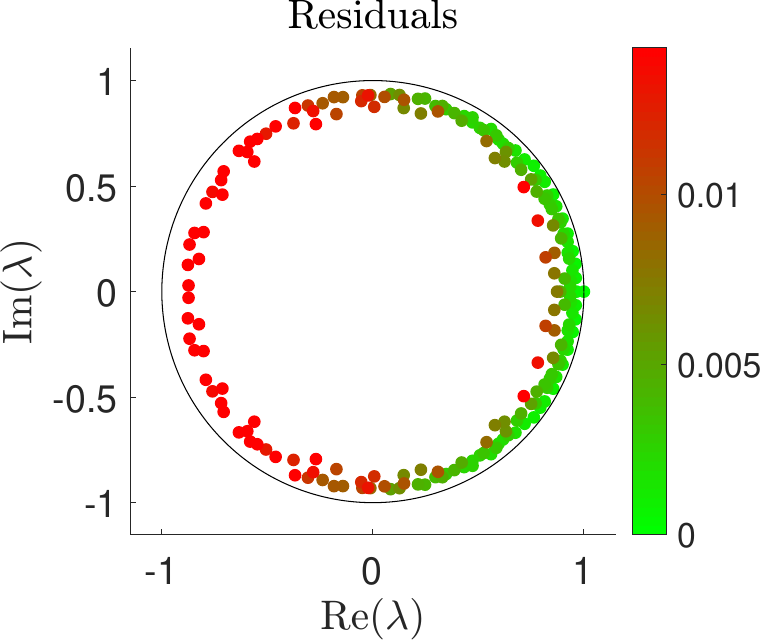}\hfill
\includegraphics[width=0.33\textwidth]{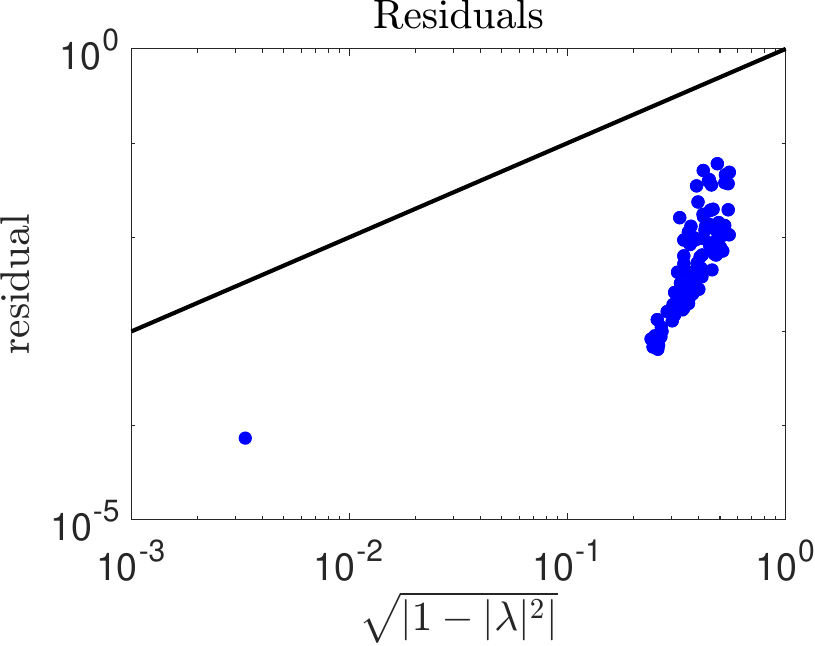}\hfill
\includegraphics[width=0.33\textwidth]{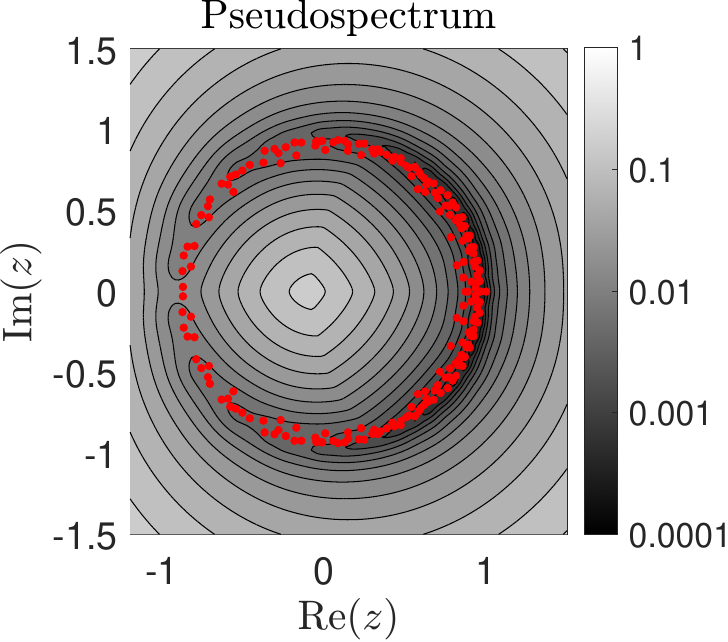}
\caption{Left and middle: The residuals of the kernelized EDMD eigenvalues computed using \cref{alg:kernel_ResDMD1} for the shockwave. Right: The pseudospectra computed using \cref{alg:kernel_ResDMD2}.}
\label{fig:LIP_spectra}
\end{figure}

\begin{figure}
\centering
\includegraphics[width=0.49\textwidth]{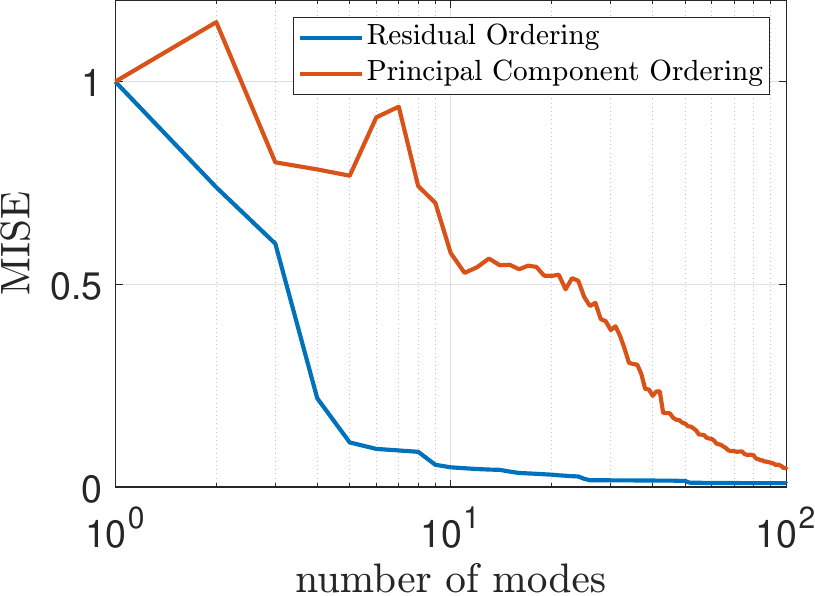}\hfill
\includegraphics[width=0.46\textwidth]{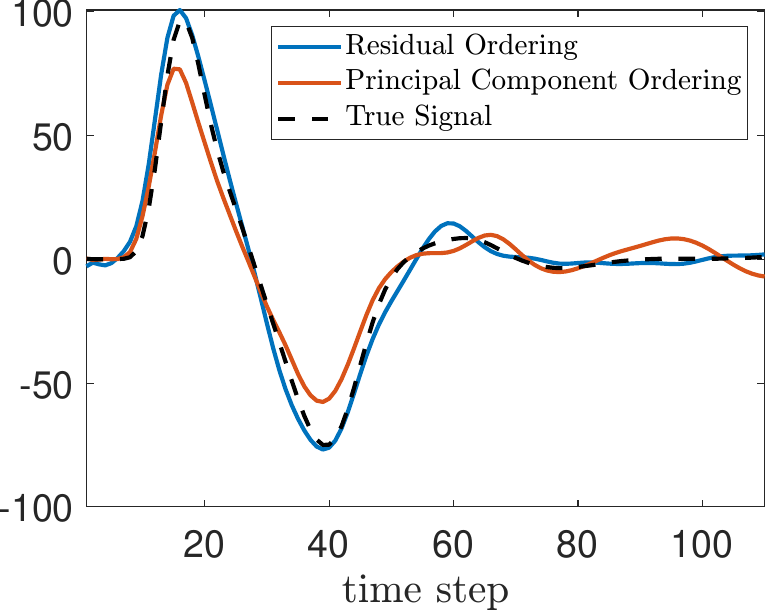}
\caption{Left: The MISE of the forecasts for the laser-induced plasma shockwave. Right: An example reconstruction using 40 modes.}
\label{fig:LIP_compresssion}
\end{figure}

\cref{fig:LIP_spectra} shows the kernelized EDMD eigenvalues and their residuals computed using \cref{alg:kernel_ResDMD1}. We have also shown the pseudospectrum computed using \cref{alg:kernel_ResDMD2}. The residuals lie far from the curve $\sqrt{1-|\lambda|^2}$ and the pseudospectrum shows a clear presence of non-normality and transient behavior. Notice also that several spurious eigenvalues lie close to $\lambda=1$.

We now show a key benefit of computing residuals. We restrict the Koopman mode decomposition to a fixed number of modes in two ways. The first way is to restrict $r$ in kernelized EDMD, corresponding to a kernel principal component ordering. The second way is to restrict the number of modes according to their residuals (keeping the first $r$ modes with the smallest residual). The IMSE of the Koopman mode decomposition of forecasts on the five training data sets are shown in \cref{fig:LIP_compresssion}, along with a typical reconstruction of both orderings when keeping only 40 modes. We see the clear benefit of the residuals in allowing a more efficient compression of the system. We can use ResDMD to compress the representation of the dynamics.

%\small
%\section*{Acknowledgements}

\normalsize

\begin{spacing}{.9}
\bibliographystyle{unsrt}
{\small\linespread{0.9}\selectfont{}
\bibliography{bib_file_DMD3}}
 \end{spacing}
\end{document}